\documentclass[11pt,a4paper]{article}
\usepackage{epsf,epsfig,amsfonts,amsgen,amsmath,amstext,amsbsy,amsopn,amsthm}
\usepackage{amsmath}
\usepackage{enumerate}
\usepackage{hhline}
\usepackage{multirow}
\usepackage{multicol}
\usepackage{booktabs}
\usepackage{lineno}
\usepackage{amsfonts,amsthm,amssymb,bm}
\usepackage{amsfonts}
\usepackage{enumitem}
\usepackage{graphics}
\usepackage{latexsym,bm}
\usepackage{amsfonts,amsthm,amssymb,bbding}
\usepackage{indentfirst}
\usepackage{graphicx}
\usepackage{authblk}
\usepackage{color}
\usepackage[colorlinks=true,anchorcolor=blue,filecolor=blue,linkcolor=red,urlcolor=blue,citecolor=blue]{hyperref}
\usepackage{float}
\usepackage{subcaption}
\usepackage{tikz,enumerate}
\usetikzlibrary{calc}
\usepackage{geometry}
\newcommand{\n}{\noindent}

\newtheorem{theorem}{Theorem}[section]

\newtheorem{lemma}[theorem]{Lemma}
\newtheorem{claim}{Claim}%[section]

\newtheorem{problem}[theorem]{Problem}

\newtheorem{conjecture}[theorem]{Conjecture}

\newtheorem{remark}[theorem]{Remark}

\renewcommand{\leq}{\leqslant}
\renewcommand{\le}{\leqslant}
\renewcommand{\geq}{\geqslant}
\renewcommand{\ge}{\geqslant}

\usepackage[numbers,sort&compress]{natbib}%使引用格式[1,2,3,4,5]变成[1-5]
\begin{document}

\title{\bf An improved polynomial $\chi$-bound for $\{P_5,C_5\}$-free graphs}

%\author[1]{Hongzhang Chen\thanks{Email: mnhzchern@gmail.com.}}
\author[1]{Kaiyang Lan\thanks{Email: kylan95@126.com.}}
\author[1]{Wenlong Zhong\footnote{Email: 2364810512@qq.com.}}

%\author[2]{Yan Wang\thanks{Email: yan.w@sjtu.edu.cn (corresponding author).}}
%\author[3]{Qi Wu\thanks{Email: wuqimath@163.com}}
%
%\affil[1]{School of Mathematics and Statistics, Gansu Center for Applied Mathematics, Lanzhou University, Lanzhou, 730000, Gansu, China}
\affil[1]{School of Mathematics and Statistics, Minnan Normal University, Zhangzhou, 363000, Fujian, China}

%\affil[3]{School of Mathematical Sciences, East China Normal University, Shanghai, 200241, China}

\date{\today}
\maketitle

% \linenumbers \pagewiselinenumbers

% \modulolinenumbers[2]

\begin{abstract}
	Nguyen~\cite{Nguyen2025} recently proved that every $\{P_5,C_5\}$-free graph $G$ satisfies $\chi(G)\leq \omega(G)^{40}$. 
	Building on his framework, we introduce two refinements, namely a sharper cutset decomposition using the $C_5$-free condition and an improved density-increment argument. 
	These yield a polynomial $\chi$-binding function with exponent $24$, improving the previous bound of $40$.
\end{abstract}

{\bf Keywords:} $\{P_5, C_5\}$-free graph, chromatic number, clique number 

{\bf 2020 AMS Subject Classifications:} 05C15, 05C75

\section{Introduction}

All graphs in this paper are finite, simple, and undirected.
For a positive integer $k$, a \textit{$k$-coloring} of a graph $G$ is a mapping $V(G)\to [k]$ such that adjacent vertices receive distinct colors, where $[k]:=\{1,\ldots,k\}$.
The \textit{chromatic number} $\chi(G)$ is the least $k$ for which such a coloring exists.
A set of pairwise adjacent vertices is a \textit{clique}.
The \textit{clique number} $\omega(G)$ is the maximum size of a clique in $G$.
A graph is \textit{perfect} if every induced subgraph $H$ satisfies $\chi(H)=\omega(H)$.
%For a set $X\subseteq V(G)$, $G[X]$ denotes the subgraph induced by $X$, and we write $\chi(X)$ and $\omega(X)$ for $\chi(G[X])$ and $\omega(G[X])$, respectively.
Given a set $\mathcal{H}$ of graphs, we say that $G$ is \textit{$\mathcal{H}$-free} if $G$ has no induced subgraph isomorphic to any graph in $\mathcal{H}$.
If $\mathcal{H}$ consists of a single graph $H$, we write $H$-free for $\{H\}$-free.

%A \textit{hereditary} graph class (also called a monoton e graph class or hereditary property) is a set of graphs that is closed under taking induced subgraphs.

A class $\mathcal{G}$ of graphs is said to be \textit{$\chi$-bounded}~\cite{Gyarfas1973} if there exists a function $f:\mathbb N\to\mathbb N$ such that every induced subgraph $H$ of any $G\in\mathcal{G}$ satisfies $\chi(H)\le f(\omega(H))$. Such an $f$ is called a \textit{$\chi$-binding function} for $\mathcal{G}$.
When $f$ can be chosen to be a polynomial, $\mathcal{G}$ is \textit{polynomially $\chi$-bounded}. Perfect graphs are polynomially $\chi$-bounded with binding function $f(x)=x$.
While $\chi(G)\ge \omega(G)$ always holds, the reverse inequality is generally false; indeed, triangle-free graphs with arbitrarily large chromatic number exist~\cite{Erdos1959,Mycielski1955,Zykov1949}.
A standard necessary condition for $\chi$-boundedness of $H$-free graphs is that $H$ be a forest. The Gy\'arf\'as--Sumner conjecture~\cite{Gyarfas1973,Sumner1981} asserts that this condition is also sufficient:

\begin{conjecture}[\cite{Gyarfas1973,Sumner1981}]\label{conjgs}
	For every forest $T$, the class of $T$-free graphs is $\chi$-bounded.
\end{conjecture}

We use $P_t$ and $C_t$ to denote, respectively, the path and the cycle on $t$ vertices.

Gy\'arf\'as~\cite{Gyarfas1987} proved Conjecture~\ref{conjgs} for $T=P_t$, showing that every $P_t$-free graph $G$ with $t\ge 4$ and $\omega(G)\ge 2$ satisfies $\chi(G)\le (t-1)^{\omega(G)-1}$; this was later improved by Gravier, Ho\`ang, and Maffray~\cite{GravierHoangMaffray2003} to $(t-2)^{\omega(G)-1}$.
Both bounds are exponential in $\omega(G)$.
Esperet~\cite{Esperet2017} conjectured that every $\chi$-bounded class is polynomially $\chi$-bounded, but this was recently disproved by Bria\'nski, Davies, and Walczak~\cite{BrianskiDaviesWalczak2024}, who constructed $\chi$-bounded classes with no polynomial binding function.
Whether $P_t$-free graphs are polynomially $\chi$-bounded remains widely open.

For $t\le 4$, the answer is trivial since $P_4$-free graphs are perfect. 
For $t=5$, polynomial bounds have been established in a series of results, beginning with Sumner~\cite{Sumner1981}, who proved $\chi(G)\le 3$ when $\omega(G)\le 2$.
Esperet, Lemoine, Maffray, and Morel~\cite{EsperetLemoineMaffrayMorel2013} proved $\chi(G)\le 5\cdot 3^{\omega(G)-3}$ for $\omega(G)\ge 3$, and this bound is tight when $\omega(G)=3$.
Scott, Seymour, and Spirkl~\cite{ScottSeymourSpirkl2023} obtained the near-polynomial bound $\chi(G)\le \omega(G)^{\log_2\omega(G)}$ for $\omega(G)\ge 3$.
Very recently, Nguyen~\cite{Nguyen2025} proved the existence of a large absolute constant $d$ such that every $P_5$-free graph satisfies $\chi(G)\le \omega(G)^d$. 
In particular, Nguyen~\cite{Nguyen2025} established the following polynomial bound for $\{P_5,C_5\}$-free graphs:

\begin{theorem}[\cite{Nguyen2025}]\label{thm:P5C540}
	Every $\{P_5,C_5\}$-free graph $G$ satisfies $\chi(G)\le \omega(G)^{40}$.
\end{theorem}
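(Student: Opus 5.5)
Theorem~\ref{thm:P5C540} is Nguyen's result, which the present paper will sharpen. I would prove it by induction on $\omega=\omega(G)$ through a density-increment scheme, combining a structural cutset lemma with a counting argument. The target is a statement of the form: every $\{P_5,C_5\}$-free graph $G$ with $\chi(G)>\omega^{40}$ contains an induced subgraph $G'$ with $\omega(G')<\omega$ but $\chi(G')>\omega(G')^{40}$, which contradicts induction. (Alternatively, the contradiction could come from a subgraph whose chromatic number is too large relative to the number of vertices it sees.)

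\textbf{Step 1 (cutset decomposition).} First I would take a minimal counterexample. It has $\chi(G)>\omega^{40}$ while every proper induced subgraph $H$ satisfies $\chi(H)\le\omega(H)^{40}$. Fix a vertex $v$ and a maximum clique $K$. In a $P_5$-free graph, every vertex at distance at least $2$ from $K$ has a neighbour in $N(K)$ that dominates enough structure. The $C_5$-freeness strengthens this: for a component $C$ of $G-N[v]$, the attachment set $N(C)\cap N(v)$ is almost complete to $C$. Concretely, if $u,w\in N(v)$ have neighbours in $C$ with $u$ non-adjacent to some $c\in C$, then a shortest path through $C$ together with $v$ creates an induced $P_5$ or $C_5$. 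I would use this to show that each component of $G-N[v]$ is separated from the rest of the graph by a set that is a join of two parts of smaller clique number. That yields a cutset whose sides each have $\chi\le(\omega-1)^{40}$ or so.

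\textbf{Step 2 (density increment).} Next I would bound $\chi(G)\le\chi(N(v))+\max_C\chi(C)+\chi(\text{attachments})$. Since $\omega(N(v))\le\omega-1$, the first term is at most $(\omega-1)^{40}$. The main work is to find a vertex set $S$ of polynomial size (say $|S|\le\omega^{c}$) such that $V(G)\setminus N[S]$ has chromatic number at most a small fraction of $\omega^{40}$. Meanwhile each $N(s)$ for $s\in S$ has clique number below $\omega$. I would choose $S$ greedily: repeatedly pick a vertex whose neighbourhood meets a large fraction of a heavy set (measured by $\chi$), using a fractional/averaging argument. Each step increases the density of the remaining high-$\chi$ part relative to some clique-bounded neighbourhood, until either it terminates or we find an induced subgraph with $\chi\ge\omega^{40}/\omega^{c}$ inside a neighbourhood of clique number $\le\omega-1$. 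Iterating the latter about $\omega$ times gives the polynomial bound, provided the loss per step satisfies $(1-1/\omega)^{40}\ge\omega^{-c}\cdot(\text{const})$-type balancing. The exponent $40$ is chosen to close this recurrence.

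\textbf{Main obstacle.} The hardest part is the density-increment lemma in Step 2: guaranteeing a bounded-size dominating-like set $S$ so that the losses compound only polynomially. Here I would follow Nguyen's strategy, using $P_5$-freeness to show that heavy pieces have a vertex seeing a $1/\mathrm{poly}(\omega)$ fraction of their chromatic mass. I would then check the arithmetic with $c$ so that $(\omega-1)^{40}\cdot\mathrm{poly}(\omega)\le\omega^{40}$ fails only in the case handled by the cutset of Step 1.
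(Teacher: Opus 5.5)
Your proposal has a genuine gap in its central mechanism. The only clique-number gain you use is $\omega(N(v))\le\omega-1$, and a drop of one cannot drive a polynomial induction. Suppose $G$ is covered by the sets $N[s]$, $s\in S$, plus a remainder. The bound you can certify is at least $|S|\,(\omega-1)^{40}$. Since $(\omega-1)^{40}>(1-40/\omega)\,\omega^{40}\ge\omega^{40}/2$ for $\omega\ge80$, already $|S|=2$ gives a bound exceeding $\omega^{40}$. Likewise, an induced subgraph with $\chi\ge\omega^{40-c}$ and clique number $\omega-1$ contradicts nothing, because $(\omega-1)^{40}\gg\omega^{40-c}$. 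And ``iterating about $\omega$ times'' with a loss of $\omega^{-c}$ per round gives $\omega^{c\omega}$, which is Gy\'arf\'as' exponential argument. In Step~2, $\max_C\chi(C)$ is exactly the uncontrolled term, since components of $G-N[v]$ may have clique number $\omega$.

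The missing idea is that the induction must be fed structures on which the clique number splits \emph{additively}, with chromatic loss polynomially matched to the split. One such structure is a complete pair $(X,Y)$ with $\chi(X)\ge y^{d}\chi(G)$ and $\chi(Y)\ge(1-y)\chi(G)$. Then $\omega(X)+\omega(Y)\le\omega$ forces either $\omega(X)\le y\omega$, giving $\chi(G)\le y^{-d}(y\omega)^d$, or $\omega(Y)<(1-y)\omega$, giving $\chi(G)\le(1-y)^{-1}((1-y)\omega)^d$. The other is a restrained blockade of length $k\ge2$ and mass at least $k^{-d}\chi(G)$, in which some block has clique number at most $\omega/k$. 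This is how Nguyen obtains $d=40$. The paper uses the same scheme to prove Theorem~\ref{thm:structure24} with $d=24$ and then Theorem~\ref{thm:intro-main}, which implies the statement at once since $\omega^{24}\le\omega^{40}$.

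Producing such structures is where all the work lies, and your Steps~1--2 do not supply it. The structural claim of Step~1 is false. Take a vertex $c$ complete to an arbitrary $\{P_5,C_5\}$-free graph $H$ of large chromatic number. Add a vertex $v$ with two nonadjacent neighbours $u,w$, each adjacent otherwise only to $c$. This graph is $\{P_5,C_5\}$-free, $C=\{c\}\cup V(H)$ is a component of $G-N[v]$, and both attachments see a single vertex of $C$. Your shortest-path argument only shows that nonadjacent attachments have a common neighbour in $C$, and the resulting induced $C_4$ is allowed.

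In the actual argument the structures come from the following chain:
\begin{enumerate}
\item Chromatic quasirandomness (Lemma~\ref{lem:quasirandom}) gives a pure pair or an $(\epsilon,\chi)$-dense subgraph.
\item Anticomplete pairs are handled by minimal cutsets, where Lemma~\ref{lem:pure-to-one} makes every cutset vertex complete to one side. $C_5$-freeness enters in Claim~\ref{clm:one-cover} and in the completeness of the blockade in Lemma~\ref{lem:local-blockade}.
\item Iterating this gives Lemma~\ref{lem:outer-decomp}.
\item Dense subgraphs are converted into long restrained blockades by the density increment (Lemmas~\ref{lem:density-step} and~\ref{lem:dense-blockade}), with Lemma~\ref{lem:dense-clique} at the terminal scale.
\end{enumerate}
A greedy dominating set of polynomial size plays none of these roles, and your sketch contains no recurrence that the exponent $40$ actually closes.
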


For $t\ge 6$, polynomial $\chi$-boundedness remains largely unexplored; the fact that the best known bounds are still exponential highlights the difficulty of the general case and motivates the study of restricted subclasses, such as those with additional forbidden induced subgraphs; see, for example, \cite{BonomoChudnovskyMaceliSchaudtSteinZhong2018,CameronHuangMerkel2021,CameronHuangPenevSivaraman2020,ChenWuXu2024,ChenXu2025a,ChudnovskyStacho2018,ChudnovskyMaceliStachoZhong2017,ChudnovskyHuangSpirklZhong2021,ChudnovskyKarthickMaceliMaffray2020,ChudnovskySpirklZhong2024a,ChudnovskySpirklZhong2024b,ChoudumKarthickShalu2007,GaspersHuang2019,Huang2024,HuangKarthick2021,KarthickMaffray2019,JuJookenGoedgebeurHuang2026,SchiermeyerRanderath2019} and the references therein. 

In this paper, building on the framework of Nguyen~\cite{Nguyen2025}, we improve Theorem~\ref{thm:P5C540} by showing:

\begin{theorem}\label{thm:intro-main}
	Every $\{P_5,C_5\}$-free graph $G$ satisfies $\chi(G)\le \omega(G)^{24}$.
\end{theorem}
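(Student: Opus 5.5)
We prove Theorem~\ref{thm:intro-main} by induction on $\omega=\omega(G)$, following the strategy of Nguyen~\cite{Nguyen2025}. For $\omega\le 2$ the result is Sumner's bound $\chi\le 3\le 2^{24}$, and the exponential bounds of Gravier--Ho\`ang--Maffray handle any bounded range of $\omega$. So we fix a minimal counterexample $G$ with $\omega\ge\omega_0$ and $\chi(G)>\omega^{24}$. The aim is to show that $G$ contains an induced subgraph $H$ with $\omega(H)\le\omega-1$ (or with noticeably smaller clique number) and $\chi(H)$ close to $\chi(G)$. That contradicts induction once the losses are at most a factor $(1-1/\omega)^{24}\approx 1-24/\omega$.

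The first step is a cutset decomposition that uses the $C_5$-free hypothesis. Take a maximal clique $K$ of size $\omega$, or more generally a vertex $v$ together with its neighbourhood. In a $P_5$-free graph, each component $C$ of $G\setminus N[X]$ attaches to $N(X)$ in a restricted way: every vertex of $N(X)$ adjacent to $C$ is complete or anticomplete to most of it. Under the additional $C_5$-freeness, the attachments of distinct components to $N(X)$ become nested, so that no two components see "crossing" parts of $N(X)$. I would use this to bound
\[
\chi(G)\le \chi(N(X))+\max_C \chi(C)
\]
with a sharper additive term than Nguyen's. Each vertex of $N(X)$ has clique number at most $\omega-1$ in its neighbourhood, so $\chi(N(v))\le(\omega-1)^{24}$ by induction. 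The goal is to show that the components $C$ lie in a part whose clique number drops after combining with a common neighbour. Nestedness lets all components be coloured with one palette on top of the neighbourhood colouring, which saves a multiplicative factor that Nguyen's cutset had to pay.

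The second step is the density-increment argument. If no cutset reduces $\omega$, we find a set $S$ with $\chi(G[S])\ge\chi(G)/\omega^{a}$ such that $G[S]$ is "dense": every vertex is adjacent to a $1/\omega^{b}$ fraction in the appropriate heavy sense. Iterating inside $S$, each round either gains density by a factor, or produces a partition into pieces of smaller clique number, or produces a homogeneous pair of sets that are complete to each other. In the last case $\omega(A)+\omega(B)\le\omega$, so $\chi(A\cup B)$ is controlled by $\omega(A)^{24}+\omega(B)^{24}$, which is far below $\omega^{24}$ unless one side is tiny. The improvement over Nguyen is to track density as a real parameter and choose the increment step optimally. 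This gives at most $O(\log\omega)$ or $O(1)$ rounds, each costing a factor $\omega^{c}$ with a smaller $c$, and it avoids losing a fresh polynomial factor per round.

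Finally, the losses are collected into an inequality of the form $\chi(G)\le \omega^{e_1}\cdot(\omega-1)^{24-e_2}+\dots$, and we check that the exponents close at $24$. The main obstacle is the density-increment bookkeeping: we must ensure that the accumulated exponent loss (from passing to $S$, from the cutset, and from the dense case) stays below what the induction on $\omega$ absorbs. I would first try to balance the parameters $a,b$ so that the dense case costs at most $\omega^{12}$ and the cutset case costs at most another $\omega^{12}$. Matching the $C_5$-free nested-attachment lemma to exactly the structure the increment step needs is where I expect the delicate work.
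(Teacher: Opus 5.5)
Your outline has a genuine gap. It never formulates the structural statement that makes the induction on $\omega$ close, and the one concrete step it proposes does not work.

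The inequality $\chi(G)\le\chi(N(v))+\max_C\chi(C)$, over components $C$ of $G\setminus N[v]$, is trivially true. Those components are pairwise anticomplete, so they share one palette whether or not their attachments are nested. But the inequality is useless: a component may have clique number $\omega$ and chromatic number close to $\chi(G)$, and recursing on it only reproduces Gy\'arf\'as-type exponential bounds. Your hope that the components ``lie in a part whose clique number drops'' is exactly the missing content. Likewise, the target ``$\omega(H)\le\omega-1$ with $\chi(H)\ge(1-O(1/\omega))\chi(G)$'' is too rigid, and the budget ``$\omega^{12}$ for the dense case plus $\omega^{12}$ for the cutset case'' is not derived from anything.

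What is needed (Theorem~\ref{thm:structure24}) is a dichotomy calibrated to the exponent. Either there is a complete pair $(X,Y)$ and $y\in(0,1/4)$ with $\chi(X)\ge y^{24}\chi(G)$ and $\chi(Y)\ge(1-y)\chi(G)$; then $\omega(X)+\omega(Y)\le\omega$ forces $\omega(X)\le y\omega$ or $\omega(Y)<(1-y)\omega$, and both cases close. Or there is a restrained blockade of length $k\ge2$ and mass at least $k^{-24}\chi(G)$, in which some block has clique number at most $\omega/k$. Your remark that complete pairs are harmless ``unless one side is tiny'' points precisely at the hard regime, and nothing in your plan handles it.

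The mechanisms that produce this dichotomy are also absent. In the paper, $C_5$-freeness is not used for nested attachments around $N(v)$. It is used inside a cutset $D$ separating a component $A$ from components $B_1,\dots,B_k$ of large chromatic number, built from minimal cutsets with $k$ maximal.
\begin{enumerate}[label=\textup{(\alph*)}]
\item By the $P_5$ path-chase (Lemma~\ref{lem:pure-to-one}), each vertex of the set $S\subseteq D$ of vertices mixed on $A$ is pure to every $B_i$.
\item For each $v\in S$, the set $S\setminus N[v]$ is covered by a \emph{single} set complete to some $B_i$. A minimal cover using two such sets would supply private witnesses $u_i,u_j$ and an induced $C_5$ on $z_i,v,z_j,u_j,u_i$ (Claim~\ref{clm:one-cover}).
\item Hence $G[S]$ is either $(\epsilon,\chi)$-dense or satisfies $\chi(S)<\theta/\epsilon$. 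This gives the anticomplete pair/complete pair/dense subgraph trichotomy of Lemma~\ref{lem:cutset}.
\item Combined with Nguyen's quasirandomness lemma and iterated with $\epsilon\mapsto\epsilon^2$, this yields one of three things. It gives a complete pair of the required shape, or a dense piece with $\chi\ge\mu\epsilon^2\chi(G)$. Otherwise it gives an anticomplete pair with both sides above $(1-w^{-1/8})w$, and then any cutset vertex $v$ gives the pair $(\{v\},N(v))$ with $y=w^{-1/8}$.
\item The dense pieces are handled by a density increment $y\mapsto y^{6/5}$. It stores blocks in a restrained blockade and uses the local-blockade lemma, where $P_5$- and $C_5$-freeness make the blocks pairwise complete. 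This produces length $k\ge\alpha\epsilon^{-1/2}$ and mass at least $\alpha^2k^{-8}\chi(F)\ge\mu\alpha^6k^{-12}\chi(G)$.
\end{enumerate}
The exponent $24$ is then fixed by constant-level constraints, the tightest being $\mu c^3>2^{-24}$ in the balanced complete-pair branch. It does not come from adding two exponent budgets.

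None of these ingredients appears in your proposal: $(\epsilon,\chi)$-density, the quasirandomness lemma, the one-cover claim, or restrained blockades. As it stands it is a strategy description rather than a proof.
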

%{\red \bf To here}

%Before sketching our proof method, we introduce some notation and terminology.

%For a vertex \(x \in V(G)\) and a set \(S \subseteq V(G)\), let \(N_S(x)\) denote the set of neighbors of \(x\) in \(S\), i.e., \(N_S(x)=N(x)\cap S\).
%For \(v \in V(G)\), \(N_G(v)\) denotes the neighborhood of \(v\), and \(d_G(v)=|N_G(v)|\) is the degree of \(v\) in \(G\). Let \(\delta(G) = \min_{v \in V(G)} d_G(v)\) denote the minimum degree of \(G\). For a set \(T \subseteq V(G)\), let \(N(T) = \bigcup_{t \in T} N_G(t)\) denote the open neighborhood of \(T\); that is, the set of all vertices outside \(T\) adjacent to at least one vertex of \(T\). (Note that this definition excludes vertices of \(T\) itself, even if they have neighbors in \(T\).) When the graph \(G\) is clear from the context, we omit the subscript.
%Let $H_1$ and $H_2$ be two vertex-disjoint graphs.
%The \textit{join} of $H_1$ and $H_2$, denoted by $H_1 + H_2$, is the graph with vertex set $V(H_1 + H_2) = V(H_1) \cup V(H_2)$ and edge set \(E(H_1 + H_2) = E(H_1) \cup E(H_2) \cup \{ uv : u \in V(H_1),\; v \in V(H_2) \}\).

\subsection*{Outline of the method}

The proof proceeds by refining the cutset decomposition of Nguyen~\cite{Nguyen2025}.
The $C_5$-free condition allows us to reduce the covering multiplicity in a key step from two to one, saving one power of the density parameter.
This is combined with a sharper density-increment argument, which improves the scale change
$y\mapsto y^2$ to $y\mapsto y^{6/5}$.
The two refinements together yield a structural theorem (Theorem~\ref{thm:structure24}) that supports induction on $\omega(G)$ and closes the bound with exponent $24$.

The rest of the paper is organized as follows.
Section~\ref{secpre} collects two preliminary tools for $P_5$-free graphs and records the numerical constants used throughout the paper.
Section~\ref{sec:decomp} develops the refined cutset decomposition and its iteration, culminating in Lemma~\ref{lem:outer-decomp}.
Section~\ref{sec:density} contains the density-increment argument and proves Lemma~\ref{lem:dense-blockade}.
Section~\ref{sec:main-proof} establishes the structural theorem (Theorem~\ref{thm:structure24}) and then applies it to prove Theorem~\ref{thm:intro-main}.
Section~\ref{conremark} gives some concluding remarks.

\section{Preliminaries}\label{secpre}

We mainly follow the notation and terminology of Nguyen~\cite{Nguyen2025}.
Let $G$ be a graph, and let $X$ and $Y$ be two disjoint nonempty subsets of $V(G)$. 
We say that $X$ is \textit{complete} to $Y$ (or the pair $(X,Y)$ is \textit{complete}) if all edges with an end in $X$ and an end in $Y$ are present in $G$, and $X$ is \textit{anticomplete} to $Y$ (or the pair $(X,Y)$ is \textit{anticomplete}) if there are no edges between $X$ and $Y$. 
The pair $(X,Y)$ is \textit{pure} if it is complete or anticomplete.
For $x \in V(G) \setminus Y$, we also say $x$ is \textit{complete} (\textit{anticomplete}) to $Y$ if
$\{x\}$ is \textit{complete} (\textit{anticomplete}) to $Y$.
Similarly, we say that $x$ is \textit{pure} to $Y$ if $(\{x\},Y)$ is pure.
A vertex $x \in V(G) \setminus Y$ is \textit{mixed} to $Y$ if it has both a neighbor and a nonneighbor in $Y$.
%Let $G[X]$ denote the subgraph of $G$ induced by $X$, and $G\setminus X$ the subgraph induced by $V(G)\setminus X$.
Let $G[X]$ denote the subgraph of $G$ induced by $X$. 
We sometimes write $\chi(X)$ for $\chi(G[X])$ and $\omega(X)$ for $\omega(G[X])$.

%Sometimes we write \(X \perp Y\) to mean that \(X\) is anticomplete to \(Y\).
%For an edge \(xy\) with \(x \in X\) and \(y \in Y\), we call \(x\) the \(X\)-\textit{endpoint} and \(y\) the \(Y\)-\textit{endpoint} of this edge.

%A vertex subset \(K \subseteq V(G)\) is a \textit{clique cutset} if \(K\) is a clique and \(G\setminus K\) has more components than \(G\).

We use the following chromatic quasirandomness lemma from~\cite[Lemma~2.6]{Nguyen2025}.

\begin{lemma}[\cite{Nguyen2025}]\label{lem:quasirandom}
	Let $\epsilon\in(0,1)$ and $\delta\in(0,2^{-7}\epsilon^2]$. Every $P_5$-free graph $G$ contains
	either
	\begin{enumerate}[label=\textup{(\roman*)}]
		\item a pure pair $(A,B)$ with $\chi(A),\chi(B)\geq\delta\cdot\chi(G)$; or
		\item an $(\epsilon,\chi)$-dense induced subgraph $F$ with
		$\chi(F)\geq\delta\cdot\chi(G)$.
	\end{enumerate}
\end{lemma}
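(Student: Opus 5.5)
This lemma is quoted from Nguyen~\cite[Lemma~2.6]{Nguyen2025}, so the paper imports it rather than reproving it. If I had to prove it, I would use a Rödl-type iteration of pure pairs, measured by chromatic number instead of vertex count. We do not yet have the precise definition of ``$(\epsilon,\chi)$-dense''. I take it to mean: for every induced subgraph $F'$ of $F$ with $\chi(F')\ge\epsilon\chi(F)$, some vertex $v$ satisfies $\chi(N(v)\cap F')\ge(1-\epsilon)\chi(F')$, or a close variant. The factor $2^{-7}\epsilon^2$ then suggests an argument with about two nested halvings of scale $\epsilon$.

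\textbf{Step 1 (set up the iteration).} Assume (ii) fails for $\delta$. Then every induced subgraph $H$ with $\chi(H)\ge\delta\chi(G)$ has a witness set $S\subseteq V(H)$ with $\chi(S)\ge\epsilon\chi(H)$ such that every vertex $v$ has $\chi(S\setminus N(v))\ge\epsilon\chi(S)$. Each vertex is thus ``sparse'' into a set of large chromatic number.

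\textbf{Step 2 (use $P_5$-freeness).} I would apply the Gyárfás path argument, which is the standard $P_5$ tool. Take a vertex $v$ and a component $C$ of $S\setminus N[v]$ with large $\chi$. The vertices of $N(v)$ with a neighbour in $C$ must be complete or anticomplete to large parts of $C$. Otherwise a mixed vertex $u$ gives an induced path $v$--$u$--$c$--$c'$, and $P_5$-freeness restricts how this path can extend. Concretely, for $u\in N(v)$ mixed on an edge $cc'$ of $C$, any further nonneighbour $w$ of $v$ adjacent to $c'$ but not to $u,c$ yields a $P_5$. So within the connected set $C$ the neighbourhood structure is highly constrained. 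Partitioning by $\chi$ (halving twice, which accounts for the powers of $2$), I would extract either $A\subseteq N(v)$ and $B\subseteq C$ forming a complete or anticomplete pair, or a smaller subgraph on which to iterate.

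\textbf{Step 3 (conclude).} Each extraction loses a factor of order $\epsilon/4$ in $\chi$, and two levels are needed: one for choosing $S$, one for the split of $C$ versus $N(v)\cap S$. This gives a pure pair with $\chi(A),\chi(B)\ge 2^{-7}\epsilon^2\chi(G)\ge\delta\chi(G)$, which is case (i).

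\textbf{Main obstacle.} The hard step is Step 2. Pure-pair theorems for $P_5$-free graphs are known in the density (vertex-count) setting, but here all quantities are chromatic, and chromatic number is only subadditive. Every partition must therefore keep one part with at least half of $\chi$. Getting the exact constant $2^{-7}$ requires tracking these halvings carefully. In practice I would simply invoke \cite[Lemma~2.6]{Nguyen2025} as the paper does.
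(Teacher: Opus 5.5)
The paper gives no proof of this lemma. It is imported verbatim from \cite[Lemma~2.6]{Nguyen2025}, so your closing suggestion to cite Nguyen is exactly what the paper does. The sketch you offer in its place, however, does not prove the statement.

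First, the definition is not missing. The paper defines $G$ to be $(\epsilon,\chi)$-dense if $\chi(G\setminus N_G[v])<\epsilon\cdot\chi(G)$ for \emph{every} $v\in V(G)$. Your guessed definition is a different notion, and with the real one your Step~1 has the quantifiers the wrong way round. Failure of (ii) only gives, for each induced $H$ with $\chi(H)\geq\delta\chi(G)$, \emph{some} vertex $v$ with $\chi(H\setminus N_H[v])\geq\epsilon\chi(H)$. It does not give a set $S$ in which every vertex misses an $\epsilon$-fraction of $\chi(S)$. What you can legitimately extract is, for instance, the following: the vertices $v$ with $\chi(G\setminus N[v])\geq\frac{\epsilon}{2}\chi(G)$ induce a subgraph of chromatic number greater than $\chi(G)/2$. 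Otherwise, deleting them leaves an $(\epsilon,\chi)$-dense induced subgraph of chromatic number at least $\chi(G)/2\geq\delta\chi(G)$.

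The real gap is Step~2, which carries all the content of the lemma and contains no mechanism. Your $P_5$ observation says that a vertex $u\in N(v)$ mixed on an edge $cc'$ of $C$ cannot be followed by a vertex of $C$ adjacent to $c'$ but not to $u,c$. This is a local induced-path fact of the same kind as Lemma~\ref{lem:pure-to-one}. Any single $u$ is trivially pure to one of $N(u)\cap C$ and $C\setminus N(u)$, but that is not what is needed. Outcome (i) requires a set $A$ with $\chi(A)\geq\delta\chi(G)$ that is complete, or anticomplete, to a \emph{common} set $B$ with $\chi(B)\geq\delta\chi(G)$. Your observation says nothing about how $N(u)\cap C$ varies with $u$, and nothing in your argument produces such a uniform pair.

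``Halving twice'' is not an argument, and the constant in Step~3 is not derived from anything: two losses of $\epsilon/4$ would give $2^{-4}\epsilon^2$, not $2^{-7}\epsilon^2$. As it stands, the proposal amounts to the citation plus a heuristic. You should present the lemma as imported from Nguyen, as the paper does, rather than as something your sketch establishes.
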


The next elementary observation is the induced-path chase that underlies the cutset arguments; see also \cite[Lemma~2.7]{Nguyen2025}.

\begin{lemma}[\cite{Nguyen2025}]\label{lem:pure-to-one}
	Let $G$ be $P_5$-free, and let $A,B\subseteq V(G)$ be nonempty and anticomplete such that
	$G[A]$ and $G[B]$ are connected. Every vertex in $V(G)\setminus(A\cup B)$ is pure to at least
	one of $A$ and $B$.
\end{lemma}
%\begin{proof}
%	Suppose that a vertex $v\notin A\cup B$ is mixed on both sets. Since $G[A]$ is connected,
%	there is an edge $aa'$ of $G[A]$ such that $v$ is adjacent to $a$ and nonadjacent to $a'$.
%	Similarly, $G[B]$ has an edge $bb'$ such that $v$ is adjacent to $b$ and nonadjacent to $b'$.
%	Then $a'-a-v-b-b'$ is an induced $P_5$, a contradiction.
%\end{proof}

For a vertex $v\in V(G)$, let $N_G(v)$ denote the set of neighbors of $v$, and let $N_G[v]:=N_G(v)\cup\{v\}$.
When the graph is clear from the context, we omit the subscript.
For real number $\varepsilon>0$, a graph $G$ is \textit{$(\varepsilon,\chi)$-dense} if \(\chi(G\setminus N_G[v])<\varepsilon\cdot\chi(G)\) for every \(v\in V(G)\).
We also use the following simple consequence of chromatic density \cite[Lemma~5.7]{Nguyen2025}.

\begin{lemma}[\cite{Nguyen2025}]\label{lem:dense-clique}
	If $G$ is $(\epsilon,\chi)$-dense, then \(\omega(G)\geq \min\{\epsilon^{-1},\chi(G)\}\).
\end{lemma}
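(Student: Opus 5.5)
Let $G$ be $(\epsilon,\chi)$-dense and write $\omega:=\omega(G)$. The plan is to build a clique greedily. At each step we pass to the common neighbourhood of the vertices chosen so far. Density lets us control how much chromatic number is lost in each step.

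Set $G_0:=G$. Suppose we have chosen a clique $\{v_1,\dots,v_i\}$ and put
\[
G_i:=G\Bigl[\,\bigcap_{j\le i}N_G(v_j)\Bigr].
\]
Every vertex of $G_i$ is adjacent to all of $v_1,\dots,v_i$. Hence if $G_i$ is nonempty, any vertex $v_{i+1}\in V(G_i)$ extends the clique, and we define $G_{i+1}$ in the same way.

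The key step is to bound the loss in chromatic number. We have
\[
V(G)\setminus V(G_i)\;\subseteq\;\bigcup_{j\le i}\bigl(V(G)\setminus N_G[v_j]\bigr)\;\cup\;\{v_1,\dots,v_i\}.
\]
The set $\{v_1,\dots,v_i\}$ is a clique, so it can be coloured with $i$ colours. By $(\epsilon,\chi)$-density, each set $V(G)\setminus N_G[v_j]$ can be coloured with fewer than $\epsilon\chi(G)$ colours. Subadditivity of $\chi$ over a cover of the vertex set then gives
\[
\chi(G)\le \chi(G_i)+i\,\epsilon\chi(G)+i .
\]

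To avoid the additive term $+i$, it is cleaner to argue directly. Use $V(G)\subseteq V(G_i)\cup\bigcup_{j\le i}\bigl(V(G)\setminus N_G(v_j)\bigr)$, and note that $V(G)\setminus N_G(v_j)=\bigl(V(G)\setminus N_G[v_j]\bigr)\cup\{v_j\}$. Adding the single vertex $v_j$ as an isolated piece raises the chromatic number by at most one when counted crudely. It is better still to recolour: $v_j$ is adjacent to nothing in $V(G)\setminus N_G[v_j]$, so $\chi\bigl(G\setminus N_G(v_j)\bigr)=\max\{\chi(G\setminus N_G[v_j]),1\}$. Since $\chi$ is an integer, $\chi(G\setminus N_G[v_j])<\epsilon\chi(G)$ gives $\chi(G\setminus N_G(v_j))\le\lceil\epsilon\chi(G)\rceil$. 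Only the weaker fact that this quantity is less than $\chi(G)/i$ will be needed.

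We now split into two cases.

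\textbf{Case $\epsilon^{-1}\le\chi(G)$.} Take $i<\epsilon^{-1}$, so that $i\epsilon<1$. Suppose the process stopped at some step $i$ with $G_i$ empty. Then $V(G)=\bigcup_{j\le i}V(G\setminus N(v_j))$, so
\[
\chi(G)\le\sum_{j\le i}\chi\bigl(G\setminus N(v_j)\bigr).
\]
Each term is either $1$ or less than $\epsilon\chi(G)$. Because $\epsilon\chi(G)\ge1$, each term is at most $\max\{1,\epsilon\chi(G)\}$ and in fact is an integer strictly less than $\epsilon\chi(G)$ unless it equals $1$. This case analysis is the main obstacle: the isolated vertex $v_j$ could push a term up to $\lceil\epsilon\chi(G)\rceil$. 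I will handle it by noting that $\epsilon\chi(G)\ge1$ forces every term to be less than $\epsilon\chi(G)+1$, so each is at most $\lceil \epsilon\chi(G)\rceil$.

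A cleaner route avoids the issue entirely. The vertices $v_1,\dots,v_i$ can be added to the colouring of $G_i\cup\bigcup_j\bigl(G\setminus N[v_j]\bigr)$ using one extra colour set of size $i$. Alternatively, observe that whenever $v_1,\dots,v_i$ is a maximal clique, the sets $G\setminus N[v_j]$ already cover every vertex outside the clique. This gives $\chi(G)<i\epsilon\chi(G)+i$, and so $i\ge \chi(G)/(1+\epsilon\chi(G))$. That bound is only about $\epsilon^{-1}/2$ when $\chi(G)\ge\epsilon^{-1}$, which falls short by a factor of two.

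So I will use the sharper argument. Choose $v_{j+1}\in G_j$ and keep track of $\chi(G_j)$. We have
\[
\chi(G_j)-\chi(G_{j+1})\le\chi\bigl(G_j\setminus N[v_{j+1}]\bigr)+1,
\]
and the $+1$ can be absorbed because $v_{j+1}$ is complete to $G_{j+1}$, so $\chi(G_{j+1}\cup\{v_{j+1}\})=\chi(G_{j+1})+1$. Telescoping with the clique included, $\chi(G)\le \chi(G[\{v_1..v_i\}\cup G_i])+\sum_j \chi(G\setminus N[v_j])<i+i\epsilon\chi(G)$ still, but a maximal such clique has size $\omega$, yielding $\omega>\chi(G)/(1+\epsilon\chi(G))$. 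Combining this with the case $\chi(G)<\epsilon^{-1}$ gives $\omega\ge\min\{\epsilon^{-1},\chi(G)\}$ up to a factor of $2$.

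The main difficulty is to remove this factor of $2$. I would try to remove it by using $\chi(G\setminus N[v])<\epsilon\chi(G)$ together with integrality and induction, choosing $v_{j+1}$ at each step to be of maximum chromatic degree.
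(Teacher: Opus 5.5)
Your proposal does not prove the lemma. The argument you actually complete gives only $\omega(G)>\chi(G)/(1+\epsilon\chi(G))$, which is weaker by a factor of $2$, and you leave the removal of that factor as an open ``main difficulty''. The case $\chi(G)<\epsilon^{-1}$, where the lemma requires $\omega(G)\ge\chi(G)$, is never treated either, and the factor-$2$ bound cannot give it. (Also, a maximal clique need not have size $\omega$; this is harmless only because $\omega\ge|K|$.)

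The factor of $2$ comes from a worry that is unfounded. You correctly note that $v$ is isolated in $G\setminus N(v)$, so $\chi(G\setminus N(v))=\max\{1,\chi(G\setminus N[v])\}$. When $\epsilon\chi(G)\ge 1$, both entries are at most $\epsilon\chi(G)$: the first by assumption, the second strictly, by density. Hence $\chi(G\setminus N(v))\le\epsilon\chi(G)$; the vertex $v$ can never push this term above $\epsilon\chi(G)$, in particular never to $\lceil\epsilon\chi(G)\rceil$ when that is larger.

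Now take any maximal clique $K$; no greedy process or special choice of vertices is needed. Every vertex outside $K$ has a nonneighbour in $K$, and each $v\in K$ lies in $V(G)\setminus N(v)$. So the sets $V(G)\setminus N(v)$, $v\in K$, cover $V(G)$, and $\chi(G)\le\sum_{v\in K}\chi(G\setminus N(v))\le |K|\,\epsilon\chi(G)$. This gives $\omega(G)\ge|K|\ge\epsilon^{-1}$. It is exactly the computation in your paragraph on the case $\epsilon^{-1}\le\chi(G)$, which you abandoned.

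In the remaining case $\epsilon\chi(G)<1$, density gives $\chi(G\setminus N[v])<1$, so $N[v]=V(G)$ for every $v$. Thus $G$ is complete and $\omega(G)=\chi(G)$. The two cases together prove the lemma.
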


We now record some numerical constants that will be used throughout the paper.
Let
\begin{equation}\label{eq:constants}
	c:=\frac{2}{101},\quad
	\alpha:=\frac{141}{500},\quad
	\beta:=\frac{13}{25},\quad
	\gamma:=\frac{17}{10},\quad\lambda:=\frac{1-c^2}{128},
	\quad\text{and}\quad
	\mu :=(1-2c^4)\lambda.
\end{equation}
%and define
%\begin{equation}\label{eq:lambda-mu}
%	\lambda:=\frac{1-c^2}{128},
%	\quad
%	\mu :=(1-2c^4)\lambda.
%\end{equation}

%\begin{lemma}\label{lem:constants}
%	With $c,\alpha,\beta,\gamma,\lambda,\mu$ as defined in \eqref{eq:constants}, the following inequalities hold:	
%\end{lemma}
%\begin{proof}
	None of the following comparisons relies on rounded decimal values.
	First,
	\[
	\alpha^2-4c
	=\frac{7\,981}{25\,250\,000}>0,
	\]
	which implies
	\begin{equation}\label{ineq:const1}
		\alpha c^{-1/2}\geq 3.
	\end{equation}
	Moreover, $\sqrt2<283/200$ and $\sqrt c<141/1000$. %indeed, $283^2>2\cdot200^2$ and $141^2\cdot101>2\cdot1000^2$. 
	Hence
	\[
	\frac{1}{2\sqrt2}-\frac{\sqrt c}{2}-\alpha
	>\frac{100}{283}-\frac{141}{2000}-\frac{141}{500}
	=\frac{97}{113\,200}>0,
	\]
	which implies
	\begin{equation}\label{ineq:const2}
		\frac{1}{2\sqrt2}-\frac{\sqrt c}{2}>\alpha.
	\end{equation}
	We also have
	\begin{equation}\label{ineq:const3}
		\beta-c>\frac12.
	\end{equation}
	and
	\begin{equation}\label{ineq:const4}
		\sqrt2+\frac{\sqrt c}{\beta}<\gamma.
	\end{equation}
	Furthermore,
	$1-\gamma\alpha-\beta=3/5000>0$, which implies
	\begin{equation}\label{ineq:const5}
		1-\gamma\alpha>\beta.
	\end{equation}
	Since $c<1/32<(13/25)^5$, by taking fifth roots, we have
	\begin{equation}\label{ineq:const6}
		c^{1/5}<\beta.
	\end{equation}
	Observe that $27\alpha^2>2$ and $\beta>1/2$; therefore
	\begin{equation}\label{ineq:const7}
		\beta\alpha^{12/5}3^{18/5}\geq 2.
	\end{equation}
	
	We next record a convenient lower bound for $\mu$.
	Since \[
	\mu c^3-2^{-24}
	=\frac{18\,962\,477\,641\,534\,267}
	{18\,348\,994\,055\,844\,422\,658\,031\,616}>0,
	\]
	we have
	\begin{equation}\label{ineq:const8}
		\mu c^3>2^{-24}.
	\end{equation}
	Since $c<1/50$, we have
	\[
	c^2+2c^4<\frac1{2500}+\frac1{3\,125\,000}<\frac1{2000}.
	\]
	Consequently,
	\begin{equation}\label{eq:mu-lower}
		\mu=\frac{(1-c^2)(1-2c^4)}{128}
		>\frac{1999}{256\,000}>\frac1{129}.
	\end{equation}
	Using \eqref{eq:mu-lower} and $c<1/50$, we obtain
	\[
	\mu^{5/12}c^{-1/3}
	>129^{-5/12}50^{1/3}>\frac13>\alpha.
	\]
	For the middle inequality, raise both sides to the twelfth power and note that
	$50^4 3^{12}=1350^4>129^5$. This implies  
	\begin{equation}\label{ineq:const9}
		\mu^{5/12}c^{-1/3}>\alpha.
	\end{equation}
	Also, $\alpha>7/25$ and
	\[
	\left(\frac{63}{25}\right)^6
	>\left(\frac52\right)^6>129,
	\]
	so \eqref{eq:mu-lower} gives
	\begin{equation}\label{ineq:const10}
		\mu\alpha^6>3^{-12}.
	\end{equation}
	In addition, \eqref{eq:mu-lower} and $c<1/50$ imply
	\begin{equation}\label{ineq:const11}
		\mu>1/129>50^{-18}>c^{18}.
	\end{equation}
	Finally, since 
	\[
	\lambda=\frac{1-c^2}{128}=\frac{1-\frac{4}{10201}}{128}<\frac{1}{128},
	\]
	we have
	\begin{equation}\label{ineq:lambda1}
		\lambda(1+c)\leq 2,\quad 2\lambda c\leq 2,\quad \text{and}\quad 2c<\frac14.
	\end{equation}
%	\begin{equation}\label{ineq:lambda2}
%		2\lambda c\leq 2,
%	\end{equation}
%	and
%	\begin{equation}\label{ineq:c-bound}
%		2c<\frac14.
%	\end{equation}
%\end{proof}

\begin{remark}\label{rmk:numeric}
	For reference, the numerical values closest to equality in~\eqref{ineq:const1},\eqref{ineq:const2},\eqref{ineq:const8},\eqref{ineq:const9}, and \eqref{ineq:const10} are
	\begin{align*}
		\alpha c^{-1/2}&=2.0039865269\ldots,\\
		\frac{1}{2\sqrt2}-\frac{\sqrt c}{2}&=0.2831936361\ldots>0.282=\alpha,\\
		\mu c^3&=6.0638078956\cdot10^{-8}>5.9604644775\cdot10^{-8}=2^{-24},\\
		\mu^{5/12}c^{-1/3}&=0.4894277762\ldots>0.282=\alpha,\\
		\mu\alpha^6&=3.9274821370\cdot10^{-6}>1.8816764232\cdot10^{-6}=3^{-12}.
	\end{align*}
	These values are included only for intuition; they play no role in the proof.
\end{remark}

\section{A refined decomposition along anticomplete pairs}\label{sec:decomp}

The first improvement comes from revisiting the central-cutset decomposition in
\cite[Section~4]{Nguyen2025}.
The extra $C_5$-free assumption reduces a two-set cover to a one-set cover.
For real numbers $0\leq p\leq q$, a graph $G$ is \textit{$(p,q)$-sparse} if every induced
subgraph $F$ of $G$ with $\chi(F)\geq q$ contains an anticomplete pair $(X,Y)$ satisfying
$\chi(X),\chi(Y)\geq p$.

%\subsection{The central-cutset lemma}

\begin{lemma}\label{lem:cutset}
	Let $G$ be a $\{P_5,C_5\}$-free graph, let $\epsilon\in(0,1)$ and $\theta>0$, and let
	$0<p\leq q\leq \chi(G)-\theta-\theta/\epsilon$.
	If $G$ is $(p,q)$-sparse, then $G$ contains at least one of the following:
	\begin{enumerate}[label=\textup{(\roman*)}]
		\item an anticomplete pair $(A,B)$ such that \(\chi(A)\geq q-2\theta\) and \(\chi(B)>\chi(G)-\theta-\theta/\epsilon\);
		\item a complete pair $(X,Y)$ such that $\chi(X)\geq\theta$ and $\chi(Y)\geq p$;
		\item an $(\epsilon,\chi)$-dense induced subgraph $F$ such that
		$\chi(F)\geq\theta/\epsilon$.
	\end{enumerate}
\end{lemma}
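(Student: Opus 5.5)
We argue by contradiction: assume none of (i)--(iii) holds and derive a contradiction. The structure follows Nguyen's central-cutset argument, with $C_5$-freeness used to cover the attachments of a cutset by a single neighbourhood instead of two.

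First, since $\chi(G)\ge q$ and $G$ is $(p,q)$-sparse, $G$ contains an anticomplete pair $(X,Y)$ with $\chi(X),\chi(Y)\ge p$. Passing to components, we may assume $G[X]$ and $G[Y]$ are connected with $\chi(X),\chi(Y)\ge p$. More generally, among all anticomplete pairs $(A,B)$ with $G[A],G[B]$ connected and $\chi(A)\ge p$, we choose one with $\chi(B)$ maximal, and subject to that, $\chi(A)$ maximal.

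Let $N$ be the set of vertices outside $A\cup B$ with a neighbour in $B$. Let $R$ be the rest, $V(G)\setminus(A\cup B\cup N)$. Then the component $C$ of $G\setminus N$ containing $B$ is exactly $B$, and $B\cup N$ separates $A\cup R$ from $B$. By Lemma~\ref{lem:pure-to-one}, every vertex of $N$ is pure to $A$ or to $B$.

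Let $N_1$ be the vertices of $N$ complete to $B$. If $\chi(N_1)\ge\theta$, then $(N_1,B)$ is a complete pair giving (ii), since $\chi(B)\ge p$. So $\chi(N_1)<\theta$.

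The remaining vertices $N_2=N\setminus N_1$ are mixed on $B$, hence pure to $A$. Those complete to $A$ form a set with $\chi<\theta$, by (ii) applied with $A$. So essentially all of $N_2$ is anticomplete to $A$. Then $A$ is anticomplete to $B\cup N_2$, and $G[B\cup N_2]$ is connected. If $\chi(B\cup N_2)$ is large, this contradicts the maximality of $\chi(B)$, unless $\chi(N_2)$ is small.

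The point is to bound $\chi(G)\le\chi(A\cup R)+\chi(B)+\chi(N)$ and to show that $\chi(G\setminus B)$ is small. Then $\chi(B)>\chi(G)-\theta-\theta/\epsilon$, which gives (i), with $\chi(A)\ge q-2\theta$ after deleting the two small sets of chromatic number less than $\theta$ each.

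The main obstacle is controlling the part $A\cup R\cup N_2$ at the correct scale. For this we pick a vertex $v$ (in Nguyen's setup, a "central" vertex of $B$) and use $C_5$-freeness together with $P_5$-freeness. The claim is that every vertex of $N_2$ that is adjacent to a vertex outside $N_{G}[v]$ yields either an induced $P_5$ through $B$ or an induced $C_5$. Hence the whole attachment set is covered by the single set $G\setminus N[v]$ together with a set of chromatic number less than $\theta$; Nguyen's version needed two such neighbourhoods.

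Now apply density to the subgraph $F$ spanned by what is left. If $\chi(F)\ge\theta/\epsilon$ and $F$ is $(\epsilon,\chi)$-dense, we get (iii). Otherwise there is a vertex $v$ of $F$ with $\chi(F\setminus N[v])\ge\epsilon\chi(F)$. Using the one-set cover, $F\setminus N[v]$ produces a new anticomplete pair that beats the extremal choice, unless $\chi(F)<\theta/\epsilon$. This yields the bound $\chi(G)\le\chi(B)+\theta+\theta/\epsilon$.

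I expect the hardest step to be the precise induced-path chase establishing the single-set cover. It must show that a mixed vertex seeing a non-neighbour side creates a $C_5$ or a $P_5$ through $v$ and a path inside $B$. I would first try taking shortest paths in $G[B]$ from $v$'s neighbourhood.
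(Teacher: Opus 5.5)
Your plan has a genuine gap. Your extremal choice (maximise $\chi(B)$, then $\chi(A)$) controls only one side of the pair, but outcome~(i) needs \emph{both} sides large, in particular $\chi(A)\geq q-2\theta$. Nothing in your sketch gives more than $\chi(A)\geq p$, and the phrase ``after deleting the two small sets of chromatic number less than $\theta$ each'' has no counterpart in your setup. Your stated target, $\chi(G\setminus B)<\theta+\theta/\epsilon$, would in fact force $\chi(A)<\theta+\theta/\epsilon$. That is below $q-2\theta$ wherever the lemma is used, where $q=(1-\epsilon^2)\chi(G)$ and $\theta=\lambda\epsilon^3\chi(G)$.

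The one-sided statement is easy. If you break ties by maximising $|B|$, Lemma~\ref{lem:pure-to-one} makes every vertex of $N(B)$ complete to $A$ or complete to $B$. Then $\chi(N(B))<2\theta$, and maximality of $\chi(B)$ gives $\chi(B)>\chi(G)-2\theta$ with no density argument at all. So outcome~(iii) and the $\theta/\epsilon$ loss exist precisely to make the \emph{second} side large, which your proposal never addresses. Separately, the step ``if $\chi(B\cup N_2)$ is large this contradicts the maximality of $\chi(B)$ unless $\chi(N_2)$ is small'' is invalid. Maximality only yields $\chi(B\cup N_2')\leq\chi(B)$, which says nothing about $\chi(N_2')$.

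What is missing is the paper's mechanism for the second side. Claim~\ref{clm:initial-cutset} takes a minimal cutset $Z$ between two components. Each vertex of $Z$ is complete to one side, so $\chi(Z)<2\theta$ and the larger component has $\chi(A)>\chi(F)-2\theta\geq q-2\theta$. The construction is then iterated inside $A$, giving a partition $(A,D,B_1,\dots,B_k,E)$ with $k$ maximal. This forces $\chi(A)<q$ while keeping $\chi(A)\geq q-2\theta$.

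The candidate dense set is $S$, the vertices of $D$ mixed on $A$. Each is pure to every $B_i$ and, because every vertex of $D$ attaches to some $B_i$, complete to at least one. A crossing claim (an induced $P_5$ through a shortest path in $A$) shows that two vertices of $S$ complete to ``opposite'' blocks are adjacent. The $C_5$ on $z_i,v,z_j,u_j,u_i$ then forces $S\setminus N[v]$ into a single set complete to one $B_i$, so $\chi(S\setminus N[v])<\theta$. Hence $\chi(S)<\theta/\epsilon$, or else (iii) holds. Removing $S$ and the vertices complete to $A$ (chromatic number below $\theta$) leaves a set $T$ anticomplete to $A$, and $\chi(A)<q$ forces $\chi(T)>\chi(G)-\theta-\theta/\epsilon$.

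In your setup nothing plays the role of the blocks $B_1,\dots,B_k$. The vertices of $N_2$ anticomplete to $A$ have no attachment on the pure side, so neither a crossing claim nor the $C_5$ argument has anything to act on. Your single-set cover, stated via a ``central vertex of $B$'', is also never formulated precisely enough to be checked.
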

\begin{proof}
	A component of $G$ with chromatic number $\chi(G)$ inherits all hypotheses, and any outcome inside that component is also an outcome in $G$.
We may therefore assume that $G$ is connected.
	Suppose that outcomes~\textup{(ii)} and~\textup{(iii)} do not occur.
%	We first record the cutset supplied by sparsity.
	
	\begin{claim}\label{clm:initial-cutset}
		Every connected induced subgraph $F$ of $G$ with $\chi(F)\geq q$ has a minimal nonempty
		cutset $Z$ separating two sets $A,B$ such that $G[A]$ and $G[B]$ are components of
		$F\setminus Z$, \(\chi(A)\geq \max\{p,q-2\theta\}\), and \(\chi(B)\geq p\).
	\end{claim}
	\begin{proof}
		Since $G$ is $(p,q)$-sparse, $F$ contains an anticomplete pair whose two sides have chromatic
		number at least $p$. Replacing each side by a component of the same chromatic number, choose
		such a pair $(A,B)$ with $\chi(A)\geq\chi(B)$ so that $\chi(A)+\chi(B)$ is maximal and,
		subject to this, $|A|+|B|$ is maximal. Because $F$ is connected, there is a minimal nonempty
		cutset $Z$ separating $A$ and $B$. If $A$ were properly contained in its component of
		$F\setminus Z$, then replacing $A$ by that component would preserve all preceding conditions
		and increase $|A|+|B|$; the same argument applies to $B$. Hence $G[A]$ and $G[B]$ are
		components of $F\setminus Z$.
		If some $z\in Z$ had no neighbor in one of these two components, then $Z\setminus\{z\}$ would still separate $A$ and $B$, contrary to the minimality of $Z$.
		It follows that every vertex of $Z$ has a neighbor in each of $A$ and $B$.
		By Lemma~\ref{lem:pure-to-one}, each vertex of
		$Z$ is complete to $A$ or complete to $B$. Let
		$Z_A$ be the set of vertices of $Z$ complete to $A$, and put $Z_B:=Z\setminus Z_A$; then
		$Z_B$ is complete to $B$. Outcome~\textup{(ii)} does not occur, so
		$\chi(Z_A),\chi(Z_B)<\theta$, and hence $\chi(Z)<2\theta$.
		No component $C$ of $F\setminus Z$ can satisfy $\chi(C)>\chi(A)$, because then $(C,B)$
		would be an anticomplete pair with larger value of $\chi(A)+\chi(B)$. Thus $A$ has maximum
		chromatic number among the components of $F\setminus Z$, so $\chi(A)=\chi(F\setminus Z)$,
		and \(\chi(A)\geq\chi(F)-\chi(Z)>\chi(F)-2\theta\geq q-2\theta\).
		Together with $\chi(A),\chi(B)\geq p$, this proves Claim~\ref{clm:initial-cutset}.
	\end{proof}
	
	Apply Claim~\ref{clm:initial-cutset} to $F=G$, and let $D$ be the resulting minimal
	cutset. Retain as $A$ the component supplied by Claim~\ref{clm:initial-cutset}. List the components of
	$G\setminus D$ other than $A$ whose chromatic number is at least $p$ as
	$B_1,\ldots,B_k$, and let $E$ be the union of all remaining components. The component
	labelled $B$ in Claim~\ref{clm:initial-cutset} appears among the $B_i$, so $k\geq1$. We have
	obtained a partition \((A,D,B_1,\ldots,B_k,E)\) of $V(G)$, where $E$ is allowed to be empty, with the following properties:
	\begin{itemize}
		\item $D$ is nonempty, and the sets $A,B_1,\ldots,B_k,E$ are pairwise anticomplete;
		\item $G[A]$ and $G[B_i]$ are connected for every $i\in[k]$;
		\item every vertex of $D$ has a neighbor in $B_1\cup\cdots\cup B_k$;
		\item $\chi(A)\geq\max\{p,q-2\theta\}$, $\chi(E)<p$, and
		$\chi(B_i)\geq p$ for every $i\in[k]$.
	\end{itemize}
	Among all partitions with these properties, choose one with $k$ maximum.
	
	\begin{claim}\label{clm:A-below-q}
		$\chi(A)<q$.
	\end{claim}
	\begin{proof}
		Suppose that $\chi(A)\geq q$. Apply Claim~\ref{clm:initial-cutset} to $G[A]$. It gives a
		minimal nonempty cutset $D'$ in $G[A]$ and two components $A',B'$ of $G[A]\setminus D'$
		with \(\chi(A')\geq\max\{p,q-2\theta\}\), and \(\chi(B')\geq p\).
		List all components of $G[A]\setminus(A'\cup D')$ with chromatic number at least $p$ as
		$B_{k+1},\ldots,B_{k+r}$; here $r\geq1$ because $B'$ is one of them. Let $E'$ be the union of
		the remaining components, so $\chi(E')<p$. The sets $E$ and $E'$ are anticomplete, and hence
		$\chi(E\cup E')<p$. Every vertex of $D$ still has a neighbor in one of the original blocks,
		while every vertex of $D'$ has a neighbor in the component $B'$ and therefore in one of the
		new blocks. Consequently, replacing $A$ by $A'$, replacing $D$ by $D\cup D'$, replacing $E$
		by $E\cup E'$, and adjoining $B_{k+1},\ldots,B_{k+r}$ produces a partition of the same type
		with more than $k$ blocks. This contradicts the maximality of $k$, proving Claim~\ref{clm:A-below-q}.
	\end{proof}
	
	Let $S$ be the set of vertices of $D$ that are mixed on $A$. Since $A$ is anticomplete to each
	$B_i$, Lemma~\ref{lem:pure-to-one} implies that every vertex of $S$ is pure to every $B_i$.
	The following two claims control $G[S]$.
	
	\begin{claim}\label{clm:crossing}
		Let $u,v\in S$ and $i,j\in[k]$. If $v$ is complete to $B_i$ and anticomplete to $B_j$, while
		$u$ is complete to $B_j$ and anticomplete to $B_i$, then $u$ and $v$ are adjacent.
	\end{claim}
	\begin{proof}
		Suppose for a contradiction that $u$ and $v$ are nonadjacent, and choose $z_i\in B_i$ and
		$z_j\in B_j$. Since $G[A]$ is connected and both $u$ and $v$ have neighbors in $A$, a
		shortest $u$--$v$ path in $G[A\cup\{u,v\}]$ has the form \(P=v-p_1-\cdots-p_\ell-u\) with $\ell\geq1$ and $p_h\in A$ for every $h$; this path is induced.
		Now $z_i$ is adjacent to $v$
		and nonadjacent to $u$, while $z_j$ is adjacent to $u$ and nonadjacent to $v$.
		Since both
		$B_i$ and $B_j$ are anticomplete to $A$, and $B_i$ is anticomplete to $B_j$, the path
		\(z_i-v-p_1-\cdots-p_\ell-u-z_j\) is induced.
		This path has at least five vertices, so five consecutive vertices induce a $P_5$, a contradiction.
		This proves Claim~\ref{clm:crossing}.
	\end{proof}
	
	\begin{claim}\label{clm:one-cover}
		For every $v\in S$, \(\chi(S\setminus N[v])<\theta\).
	\end{claim}
	\begin{proof}
		Let $I$ be the set of indices $i\in[k]$ for which $v$ has a neighbor in $B_i$.
		Since every vertex of $D$ has a neighbor in $B_1\cup\cdots\cup B_k$, the set $I$ is
		nonempty, and by Lemma~\ref{lem:pure-to-one}, $v$ is complete to every $B_i$ with $i\in I$. For $i\in I$, let $S_i$ be the set of vertices in $S\setminus N[v]$ that have a neighbor in $B_i$.
		By Lemma~\ref{lem:pure-to-one}, each $S_i$ is complete to $B_i$, and therefore $\chi(S_i)<\theta$, since outcome~\textup{(ii)} does not occur.
		
		We show that the sets $S_i$ cover $S\setminus N[v]$. Let
		$u\in S\setminus N[v]$.
		If $u$ had no neighbor in any $B_i$ with $i\in I$, then Lemma~\ref{lem:pure-to-one} and the defining
		property of $D$ would give an index $j\notin I$ such that $u$ is complete to $B_j$. Choosing any
		$i\in I$, Claim~\ref{clm:crossing} would force $u$ and $v$ to be adjacent, a contradiction.
		Thus \(S\setminus N[v]\subseteq\bigcup_{i\in I}S_i\).
		Choose $J\subseteq I$ minimal with this covering property. We claim that $|J|\leq1$. Suppose
		that distinct $i,j\in J$ exist.
		By minimality, choose
		\(u_i\in S_i\setminus S_j\) and \(u_j\in S_j\setminus S_i\).
		Then by Lemma~\ref{lem:pure-to-one}, $u_i$ is complete to $B_i$ and anticomplete to $B_j$, while $u_j$ is complete to $B_j$ and anticomplete to $B_i$. Claim~\ref{clm:crossing} gives $u_i u_j\in E(G)$. For arbitrary
		$z_i\in B_i$ and $z_j\in B_j$, the five vertices \(z_i,v,z_j,u_j,u_i\) in this cyclic order induce a $C_5$, a contradiction.
		%: the required cycle edges follow from the definitions, and all five possible chords are absent because $B_i$ and $B_j$ lie in different components of $G\setminus D$, both $u_i,u_j$ lie in $S\setminus N[v]$, and each private witness is anticomplete to the other block. This contradicts the $C_5$-free hypothesis. 
		Hence $|J|\leq1$, and Claim~\ref{clm:one-cover} follows from $\chi(S_i)<\theta$.
	\end{proof}
	
	If $\chi(S)\geq\theta/\epsilon$, then Claim~\ref{clm:one-cover} gives, for every $v\in S$, \(\chi(S\setminus N[v])<\theta\leq\epsilon\cdot\chi(S)\).
	Thus $G[S]$ is $(\epsilon,\chi)$-dense and has chromatic number at least $\theta/\epsilon$, contrary
	to the failure of outcome~\textup{(iii)}. Therefore
	\begin{equation}\label{eq:S-small}
		\chi(S)<\theta/\epsilon.
	\end{equation}
	
	Let $R$ be the set of vertices in $D$ complete to $A$. Since $\chi(A)\geq p$ and
	outcome~\textup{(ii)} fails,
	\begin{equation}\label{eq:R-small}
		\chi(R)<\theta.
	\end{equation}
	By Lemma~\ref{lem:pure-to-one}, every vertex in $D\setminus(R\cup S)$ is anticomplete to $A$. Also, $A$ is anticomplete to
	$B_1\cup\cdots\cup B_k\cup E$. Set
	\[
	T:=(D\setminus(R\cup S))\cup B_1\cup\cdots\cup B_k\cup E.
	\]
	By \eqref{eq:S-small} and \eqref{eq:R-small},
	\[
	\chi(A\cup T)
	\geq \chi(G)-\chi(R\cup S)
	>\chi(G)-\theta-\theta/\epsilon
	\geq q.
	\]
	Claim~\ref{clm:A-below-q} gives $\chi(A)<q$. Since $A$ and $T$ are anticomplete,
	$\chi(A\cup T)=\max\{\chi(A),\chi(T)\}$, so \(\chi(T)>\chi(G)-\theta-\theta/\epsilon\).
	The pair $(A,T)$ satisfies outcome~\textup{(i)} and completes the proof of Lemma~\ref{lem:cutset}.
\end{proof}

%\subsection{The improved one-step decomposition}

\begin{lemma}\label{lem:base-decomp}
	For every $\epsilon\in(0,c]$, every $\{P_5,C_5\}$-free graph $G$ with $\chi(G)>0$ contains at
	least one of the following:
	\begin{enumerate}[label=\textup{(\roman*)}]
		\item an anticomplete pair $(A,B)$ with \(\chi(A),\chi(B)\geq(1-2\epsilon^2)\chi(G)\);
		\item a complete pair $(X,Y)$ with \(\chi(X),\chi(Y)\geq\lambda\epsilon^3\chi(G)\);
		\item an $(\epsilon,\chi)$-dense induced subgraph $F$ with \(\chi(F)\geq\lambda\epsilon^2\chi(G)\).
	\end{enumerate}
\end{lemma}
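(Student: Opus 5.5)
We will derive Lemma~\ref{lem:base-decomp} by combining Lemma~\ref{lem:quasirandom} with Lemma~\ref{lem:cutset}, with parameters tied to $\epsilon$. Write $x:=\chi(G)$. Set $\theta:=\epsilon^2 x$, so $\theta+\theta/\epsilon=(\epsilon+\epsilon^2)x$ and $\theta/\epsilon=\epsilon x$. The sparsity parameters will be $p:=\lambda\epsilon^3 x$ and $q:=\lambda \epsilon^2 x$, or something close to these with room to spare. The aim is to show that either $G$ is $(p,q)$-sparse, or outcome~(ii) or~(iii) of the lemma occurs directly. In the sparse case, Lemma~\ref{lem:cutset} will then be iterated to produce outcome~(i).

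\textbf{Step 1: sparsity or a direct outcome.} Let $F$ be an induced subgraph with $\chi(F)\ge q$. Apply Lemma~\ref{lem:quasirandom} to $F$ with $\epsilon$ and $\delta:=2^{-7}\epsilon^2$. A dense outcome gives an $(\epsilon,\chi)$-dense subgraph with chromatic number at least $2^{-7}\epsilon^2 q$. That is too weak, so instead we take $q$ large, namely $q:=\lambda' x$ with $\lambda'$ close to $1-c^2$ scaled by $2^{-7}$. Then $\delta q=2^{-7}\epsilon^2\cdot(1-c^2)x\ge\lambda\epsilon^2 x$; this is exactly why $\lambda=(1-c^2)/128$. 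So we will take $q:=(1-c^2)x$ rather than a small $q$.

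A pure pair from Lemma~\ref{lem:quasirandom} has sides of chromatic number at least $\delta q\ge \lambda\epsilon^2 x\ge\lambda\epsilon^3 x$. If it is complete, this is outcome~(ii). Hence every $F$ with $\chi(F)\ge q$ either contains an anticomplete pair with sides at least $p:=\lambda\epsilon^2 x$, or yields~(ii) or~(iii). In the surviving case $G$ is $(p,q)$-sparse with $p=\lambda\epsilon^2x$.

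\textbf{Step 2: apply Lemma~\ref{lem:cutset}.} The hypothesis $q\le x-\theta-\theta/\epsilon$ requires $(1-c^2)x\le(1-\epsilon-\epsilon^2)x$, which fails. So $q$ has to be rebalanced, and this is the main obstacle. The cleaner route is to choose $\theta:=\lambda\epsilon^3 x$, matching outcome~(ii): a complete pair with $\chi(X)\ge\theta$ and $\chi(Y)\ge p\ge\theta$ is exactly~(ii). Then $\theta/\epsilon=\lambda\epsilon^2 x$ matches outcome~(iii), and $\theta+\theta/\epsilon=\lambda\epsilon^2(1+\epsilon)x\le 2\lambda\epsilon^2 x$, which is tiny. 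Now take $q:=x-\theta-\theta/\epsilon$, so that the hypothesis of Lemma~\ref{lem:cutset} holds with equality. Step~1 must then give an anticomplete pair with sides at least $\delta q\ge 2^{-7}\epsilon^2(1-2\lambda c)x$, and we need this to be at least $p$. Set $p:=\lambda\epsilon^3x$; then $p\le\delta q$ holds easily, using~\eqref{ineq:lambda1}.

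Outcome~(i) of Lemma~\ref{lem:cutset} then gives $\chi(A)\ge q-2\theta\ge x-\lambda\epsilon^2(1+3\epsilon)x$ and $\chi(B)>x-\lambda\epsilon^2(1+\epsilon)x$. Since $\lambda(1+3\epsilon)<2$, both are at least $(1-2\epsilon^2)x$, which is outcome~(i) of Lemma~\ref{lem:base-decomp}. Outcomes~(ii) and~(iii) of Lemma~\ref{lem:cutset} translate directly. The dense outcome in Step~1 has chromatic number at least $\delta q\ge2^{-7}\epsilon^2(1-c^2)x=\lambda\epsilon^2x$, using $\epsilon+\epsilon^2\le c^2$-type slack. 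If that inequality is too tight, we will instead adjust $q$ by the factor $1-c^2$ and verify the bookkeeping against~\eqref{ineq:lambda1}. The delicate point throughout is checking that these constants line up; the structural content is entirely supplied by Lemmas~\ref{lem:quasirandom} and~\ref{lem:cutset}.
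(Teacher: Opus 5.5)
Your final, rebalanced argument ($\theta=p=\lambda\epsilon^3\chi(G)$, sparsity from Lemma~\ref{lem:quasirandom} with $\delta=2^{-7}\epsilon^2$, then a single application of Lemma~\ref{lem:cutset}) is correct and is essentially the paper's proof; the only difference is that you take the largest admissible $q=\chi(G)-\theta-\theta/\epsilon$, whereas the paper takes $q=(1-\epsilon^2)\chi(G)$, and this changes nothing. When you write it up, drop the abandoned first parameter choices and the word ``iterated''. Also justify the dense branch by $\lambda\epsilon^2(1+\epsilon)\le\epsilon^2\le c^2$ (using $\lambda(1+c)<1$), not by ``$\epsilon+\epsilon^2\le c^2$'', which is false for $\epsilon$ near $c$.
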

\begin{proof}
	Set \(\theta:=\lambda\epsilon^3\chi(G)\), \(p:=\theta\), and \(q:=(1-\epsilon^2)\chi(G)\).
	The inequality in \eqref{ineq:lambda1} implies \(\theta+\theta/\epsilon=\lambda\epsilon^2(1+\epsilon)\chi(G)\leq\epsilon^2\chi(G)\), so $q\leq\chi(G)-\theta-\theta/\epsilon$.
	Suppose that outcomes~\textup{(ii)} and~\textup{(iii)} do not occur. We claim that $G$ is
	$(p,q)$-sparse. Let $H$ be an induced subgraph with $\chi(H)\geq q$. Apply
	Lemma~\ref{lem:quasirandom} to $H$ with parameter $\epsilon$ and
	$\delta=2^{-7}\epsilon^2$. Since $\epsilon\leq c$, we have
	\[
	\delta\chi(H)
	\geq \frac{\epsilon^2}{128}(1-\epsilon^2)\chi(G)
	\geq \frac{\epsilon^2}{128}(1-c^2)\chi(G)
	=\lambda\epsilon^2\chi(G).
	\]
	If the dense outcome of Lemma~\ref{lem:quasirandom} occurs, then outcome~\textup{(iii)} above
	holds. If its pure pair is complete, then outcome~\textup{(ii)} holds, because
	$\lambda\epsilon^2\chi(G)\geq p$. Therefore the pure pair must be anticomplete, and its two sides have chromatic number at least $p$. This proves that $G$ is $(p,q)$-sparse.
	Apply Lemma~\ref{lem:cutset}. Its second and third outcomes are exactly
	outcomes~\textup{(ii)} and~\textup{(iii)} here. In its first outcome, \(\chi(A)\geq(1-\epsilon^2-2\lambda\epsilon^3)\chi(G)
	\geq(1-2\epsilon^2)\chi(G)\), where we used $2\lambda\epsilon<1$ in \eqref{ineq:lambda1}.
	Also, \(\chi(B)>\bigl(1-\lambda\epsilon^2(1+\epsilon)\bigr)\chi(G)
	\geq(1-2\epsilon^2)\chi(G)\) by \eqref{ineq:lambda1}.
	Thus outcome~\textup{(i)} holds.
	This proves Lemma~\ref{lem:base-decomp}.
\end{proof}

%\subsection{Iteration}

For $s\geq0$, we define
\[
\phi_0(c):=1,\quad
\phi_s(c):=\prod_{i=1}^s\bigl(1-c^{2^{i+1}}\bigr),
\]
and, for $0\leq r\leq s$, define
\[
\phi_{r,s}(c):=\frac{\phi_s(c)}{\phi_r(c)}
=\prod_{r<i\leq s}\bigl(1-c^{2^{i+1}}\bigr).
\]
The empty product equals $1$.

\begin{lemma}\label{lem:phi-bound}
	For all integers $0\leq r\leq s$, \(\phi_{r,s}(c)\geq1-2c^{2^{r+2}}\).
	In particular, $\phi_s(c)\geq1-2c^4$.
\end{lemma}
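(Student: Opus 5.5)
We bound the product using the elementary inequality $\prod_i(1-x_i)\geq 1-\sum_i x_i$, valid whenever $x_i\in[0,1]$. This inequality follows by induction on the number of factors. If $P\geq 1-\sum_{i<m}x_i$, then
\[
P(1-x_m)\geq P-x_m\geq 1-\sum_{i\leq m}x_i,
\]
where we used $0\leq P\leq 1$ for the first step. Since $0<c<1$, each factor has $x_i=c^{2^{i+1}}\in(0,1)$. Hence
\[
\phi_{r,s}(c)\geq 1-\sum_{i=r+1}^{s}c^{2^{i+1}}.
\]

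It then suffices to show that $\sum_{i=r+1}^{s}c^{2^{i+1}}\leq 2c^{2^{r+2}}$. The exponents $2^{i+1}$ for $i\geq r+1$ are $2^{r+2},2^{r+3},\dots$. Each of these is at least $2^{r+2}+(i-r-1)$, because $2^{m+1}\geq 2^m+1$ for $m\geq 0$ and the increments only grow. Since $c<1$, this gives the termwise comparison $c^{2^{i+1}}\leq c^{2^{r+2}}\,c^{\,i-r-1}$. Summing the resulting geometric series yields
\[
\sum_{i=r+1}^{s}c^{2^{i+1}}\leq \frac{c^{2^{r+2}}}{1-c}\leq 2c^{2^{r+2}},
\]
since $c=2/101\leq 1/2$. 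This proves $\phi_{r,s}(c)\geq 1-2c^{2^{r+2}}$. (The cruder comparison $c^{2^{i+1}}\leq c^{2^{r+2}}c^{2^{i+1}-2^{r+2}}$ together with $2^{i+1}-2^{r+2}\geq i-r-1$ works equally well.) When $r=s$ the product is empty and equal to $1$, so the bound is trivially true.

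The special case follows by taking $r=0$. Then $\phi_{0,s}(c)=\phi_s(c)/\phi_0(c)=\phi_s(c)$, and the bound becomes $\phi_s(c)\geq 1-2c^{4}$.

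The only step requiring care is the geometric domination of the doubly exponential exponents, and it is routine. No real obstacle is expected.
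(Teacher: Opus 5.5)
Your proof is correct and follows the paper's route: first the elementary bound $\prod_j(1-x_j)\geq 1-\sum_j x_j$, then a geometric-series estimate $\sum_{r<i\leq s}c^{2^{i+1}}\leq 2c^{2^{r+2}}$. The only cosmetic difference is in the series. The paper writes the terms as $z,z^2,z^4,\dots$ with $z=c^{2^{r+2}}$ and sums a geometric series of ratio $z\leq 1/2$, whereas you dominate by a geometric series of ratio $c\leq 1/2$; both yield the same bound.
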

\begin{proof}
	The statement is immediate when $r=s$. Otherwise, using
	$\prod_j(1-x_j)\geq1-\sum_j x_j$ for $x_j\in[0,1]$, and setting
	$z=c^{2^{r+2}}$, we obtain
	\[
	\phi_{r,s}(c)
	\geq1-\sum_{r<i\leq s}c^{2^{i+1}}
	\geq1-(z+z^2+z^4+\cdots).
	\]
	Since $z\leq c^4<1/2$ by \eqref{ineq:lambda1}, the final sum is at most $z/(1-z)\leq2z$.
	This proves Lemma~\ref{lem:phi-bound}.
\end{proof}

\begin{lemma}\label{lem:iterated-decomp}
	For every integer $s\geq0$, every $\{P_5,C_5\}$-free graph $G$ contains at least one of the
	following:
	\begin{enumerate}[label=\textup{(\roman*)}]
		\item an anticomplete pair $(A,B)$ with \(\chi(A),\chi(B)\geq\bigl(1-2c^{2^{s+1}}\bigr)\chi(G)\);
		\item a complete pair $(P,Q)$ with \(\chi(P),\chi(Q)\geq\phi_s(c)\lambda c^3\chi(G)\);
		\item for some integer $r$ with $1\leq r\leq s$, a complete pair $(X,Y)$ with \(\chi(X)\geq\phi_{r,s}(c)\lambda c^{3\cdot2^r}\chi(G)\) and \(\chi(Y)\geq\phi_{r,s}(c)\bigl(1-3c^{2^r}\bigr)\chi(G)\);
		\item for some integer $r$ with $0\leq r\leq s$, a
		$(c^{2^r},\chi)$-dense induced subgraph $F$ with \(\chi(F)\geq\phi_{r,s}(c)\lambda c^{2^{r+1}}\chi(G)\).
	\end{enumerate}
\end{lemma}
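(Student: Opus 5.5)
The plan is induction on $s$. Each step applies Lemma~\ref{lem:base-decomp} at the finest scale $\epsilon_s:=c^{2^s}$ and, when that fails to give a pure pair directly, descends to the statement for $s-1$ inside an induced subgraph that has lost only a factor $(1-\epsilon_s^2)=(1-c^{2^{s+1}})$ of chromatic number. This is what the telescoping factors $\phi_{r,s}(c)=\prod_{r<i\le s}(1-c^{2^{i+1}})$ suggest: an outcome produced at level $r$ is multiplied by one factor $1-\epsilon_i^2$ for each level $i=r+1,\dots,s$ through which we descended.

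\emph{Base case $s=0$.} Apply Lemma~\ref{lem:base-decomp} with $\epsilon=c$. Its outcome~(i) gives an anticomplete pair with both sides at least $(1-2c^2)\chi(G)$, which is outcome~(i). Its outcome~(ii) gives a complete pair with both sides at least $\lambda c^3\chi(G)=\phi_0(c)\lambda c^3\chi(G)$, which is outcome~(ii). Its outcome~(iii) gives a $(c,\chi)$-dense subgraph with $\chi\ge\lambda c^2\chi(G)$, which is outcome~(iv) with $r=0$.

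\emph{Inductive step, $s\ge1$.} Apply Lemma~\ref{lem:base-decomp} to $G$ with $\epsilon=\epsilon_s\le c$. Two of its outcomes finish immediately.
\begin{itemize}
\item Outcome~(i) is exactly outcome~(i) for $s$, since $2\epsilon_s^2=2c^{2^{s+1}}$.
\item Outcome~(iii) gives a $(c^{2^s},\chi)$-dense subgraph with $\chi\ge\lambda c^{2^{s+1}}\chi(G)$. This is outcome~(iv) with $r=s$, because $\phi_{s,s}=1$.
\end{itemize}
The remaining case is a complete pair $(X,Y)$ with $\chi(X),\chi(Y)\ge\lambda c^{3\cdot2^s}\chi(G)$. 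Here I would aim for a dichotomy.
\begin{itemize}
\item Either some set $Y'$ complete to a piece of $X$ of chromatic number at least $\lambda c^{3\cdot 2^s}\chi(G)$ has $\chi(Y')\ge(1-3c^{2^s})\chi(G)$. This is outcome~(iii) with $r=s$, again with $\phi_{s,s}=1$.
\item Or there is an induced subgraph $G'$ with $\chi(G')\ge(1-c^{2^{s+1}})\chi(G)$ to which we apply the induction hypothesis for $s-1$.
\end{itemize}

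In the second branch, outcomes~(ii)--(iv) for $s-1$ in $G'$ become outcomes~(ii)--(iv) for $s$ in $G$. This uses $\phi_{r,s-1}(c)\,(1-c^{2^{s+1}})=\phi_{r,s}(c)$ and $\phi_{s-1}(c)(1-c^{2^{s+1}})=\phi_s(c)$. Lemma~\ref{lem:phi-bound} keeps these factors bounded below throughout. What remains is the anticomplete outcome~(i) for $s-1$ in $G'$. This gives connected-component-refinable sets $A,B$ with $\chi(A),\chi(B)\ge(1-2c^{2^s})\chi(G')$, which is weaker than required. Here the complete pair $(X,Y)$ must be combined with $(A,B)$. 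Passing to components, I would use Lemma~\ref{lem:pure-to-one} and the path-chasing of Claim~\ref{clm:crossing}. The aim is to show that the relevant part of $X$ is pure to $A$ and to $B$. I would then show that either a sizeable part of $X$ is complete to one of $A$, $B$ together with $Y$, giving a set $Y'$ with $\chi(Y')\ge(1-2c^{2^s})(1-c^{2^{s+1}})\chi(G)\ge(1-3c^{2^s})\chi(G)$ and hence outcome~(iii) at level $r=s$. Otherwise, $X$ is anticomplete to one side, which lets us enlarge the other side and restart with a strictly better anticomplete pair, eventually yielding outcome~(i). The choice of $G'$ itself should be $G$ minus a set of chromatic number at most $c^{2^{s+1}}\chi(G)$, for instance the part of the cutset controlled as in \eqref{eq:S-small} and \eqref{eq:R-small}.

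The main obstacle is exactly this last combination step. The difficulty is converting the small complete pair from Lemma~\ref{lem:base-decomp}, together with a merely $(1-2c^{2^s})$-large anticomplete pair from the coarser level, into either a complete pair with one nearly spanning side or a $(1-2c^{2^{s+1}})$-large anticomplete pair. I would first attempt it by the $C_5$-freeness trick of Claim~\ref{clm:one-cover}: vertices of $X$ with different attachment patterns to $A$ and $B$ form a $C_5$ with $Y$. The numerical slack needed, $(1-2c^{2^s})(1-c^{2^{s+1}})\ge 1-3c^{2^s}$, is routine.
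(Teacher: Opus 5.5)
Your base case is correct, and in the inductive step outcomes (i) and (iii) of Lemma~\ref{lem:base-decomp} at scale $\epsilon_s=c^{2^s}$ do land in outcomes (i) and (iv) with $r=s$. The remaining case is where the proof lives, and the proposal does not close it. The complete pair from outcome (ii) of Lemma~\ref{lem:base-decomp} has both sides only $\ge\lambda c^{3\cdot2^s}\chi(G)$. Nothing in your plan turns it into a complete pair with a side of chromatic number $\ge(1-3c^{2^s})\chi(G)$, and the dichotomy ``either such a $Y'$ exists or there is a suitable $G'$'' is asserted without justification.

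The proposed combination with an anticomplete pair $(A,B)$ from level $s-1$ in some $G'$ has no mechanism either. The pairs $(X,Y)$ and $(A,B)$ need not interact at all; for instance $X\cup Y$ may lie inside $A$. Nor does ``enlarge the other side and restart'' explain why the sides would ever climb from $(1-2c^{2^s})\chi(G')$ to $(1-2c^{2^{s+1}})\chi(G)$. That upgrade needs the full cutset analysis of Lemma~\ref{lem:cutset}: the maximal partition, Claim~\ref{clm:A-below-q}, and the bounds on $R$ and $S$. Using Lemma~\ref{lem:base-decomp} as a black box is the wrong move. Its proof feeds Lemma~\ref{lem:cutset} a tiny sparsity parameter $p=\theta$, which is exactly why its complete outcome is balanced and small.

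The missing idea is to use the induction hypothesis as the sparsity certificate and then apply Lemma~\ref{lem:cutset} directly. Put $e:=c^{2^s}$, $p:=(1-3e)\chi(G)$ and $q:=(1-e^2)\chi(G)$, and assume outcomes (ii)--(iv) fail at level $s$.

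Apply the level-$(s-1)$ statement to every induced subgraph $H$ with $\chi(H)\ge q$, not to a single $G'$. Its outcomes (ii)--(iv) would transfer to $G$ at level $s$ via $\phi_{r,s-1}(c)(1-e^2)=\phi_{r,s}(c)$. So $H$ must contain an anticomplete pair with sides $\ge(1-2e)(1-e^2)\chi(G)\ge p$, and hence $G$ is $(p,q)$-sparse.

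Now apply Lemma~\ref{lem:cutset} with density parameter $e$ and $\theta=\lambda e^3\chi(G)$. It yields one of the following:
\begin{enumerate}[label=\textup{(\alph*)}]
\item a complete pair with $\chi(X)\ge\lambda c^{3\cdot2^s}\chi(G)$ and $\chi(Y)\ge p$, which is outcome (iii) with $r=s$;
\item an $(e,\chi)$-dense subgraph with $\chi\ge\lambda e^2\chi(G)$, which is outcome (iv) with $r=s$;
\item an anticomplete pair with both sides $\ge(1-2e^2)\chi(G)$, which is outcome (i).
\end{enumerate}
The nearly spanning side $Y$ comes for free, because the blocks in the cutset decomposition have chromatic number at least the large $p$. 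Your telescoping intuition for the factors $1-e^2$ was right, but they come from this quantification over all subgraphs of chromatic number at least $q$.
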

\begin{proof}
	We use induction on $s$. The case $s=0$ is Lemma~\ref{lem:base-decomp} with $\epsilon=c$.
	
	Assume the statement for $s$, and let \(e:=c^{2^{s+1}}\), \(p:=(1-3e)\chi(G)\), and \(q:=(1-e^2)\chi(G)\).
	Suppose that outcomes~\textup{(ii)--(iv)} for $s+1$ do not occur. We show that $G$ is
	$(p,q)$-sparse. Let $H$ be an induced subgraph with $\chi(H)\geq q$. Apply the induction
	hypothesis to $H$. If its second outcome occurs, then \(\phi_s(c)\lambda c^3\chi(H)
	\geq \phi_s(c)(1-e^2)\lambda c^3\chi(G)
	=\phi_{s+1}(c)\lambda c^3\chi(G)\), which is the second outcome for $G$ at level $s+1$.
	For every $r\leq s$, the identity \(\phi_{r,s}(c)(1-e^2)=\phi_{r,s+1}(c)\) transfers the third and fourth outcomes in exactly the same way. Each possibility contradicts
	our assumption that outcomes~\textup{(ii)--(iv)} fail for $G$.
	Hence the first outcome occurs
	in $H$, and its two sides
	have chromatic number at least \((1-2e)\chi(H)
	\geq(1-2e)(1-e^2)\chi(G)
	\geq(1-3e)\chi(G)=p\), because $(1-2e)(1-e^2)-(1-3e)=e-e^2+2e^3>0$.
	Thus $G$ is $(p,q)$-sparse.
	
	Apply Lemma~\ref{lem:cutset} with density parameter $e$ and
	$\theta=\lambda e^3\chi(G)$. The condition
	$q\leq\chi(G)-\theta-\theta/e$ follows from $\lambda(1+c)<1$ in \eqref{ineq:lambda1}. The second outcome of
	Lemma~\ref{lem:cutset} gives outcome~\textup{(iii)} here with $r=s+1$, and its third outcome
	gives outcome~\textup{(iv)} with $r=s+1$. In the remaining case, the two sides of the
	anticomplete pair have chromatic number at least \((1-e^2-2\lambda e^3)\chi(G)\geq(1-2e^2)\chi(G)\) and \(\bigl(1-\lambda e^2(1+e)\bigr)\chi(G)
	\geq(1-2e^2)\chi(G)\), respectively. This is outcome~\textup{(i)} for $s+1$.
	This proves Lemma~\ref{lem:iterated-decomp}.
\end{proof}

%The iteration yields the outer decomposition needed later.

\begin{lemma}\label{lem:outer-decomp}
	Let $G$ be a $\{P_5,C_5\}$-free graph with $\chi(G)\geq c^{-4}$. Then $G$ contains at least
	one of the following:
	\begin{enumerate}[label=\textup{(\roman*)}]
		\item a complete pair $(P,Q)$ with \(\chi(P),\chi(Q)\geq2^{-24}\chi(G)\);
		\item a complete pair $(X,Y)$ and a number $y\in(0,1/4)$ such that \(\chi(X)\geq y^{24}\chi(G)\) and \(\chi(Y)\geq(1-y)\chi(G)\);
		\item an $\epsilon\in[\chi(G)^{-1/4},c]$ and an $(\epsilon,\chi)$-dense induced subgraph $F$ with \(\chi(F)\geq\mu\epsilon^2\chi(G)\).
	\end{enumerate}
\end{lemma}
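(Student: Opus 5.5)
The plan is to apply Lemma~\ref{lem:iterated-decomp} with $s$ chosen as large as the range of $\epsilon$ in outcome~(iii) allows. Then I translate its outcomes (ii)--(iv) into the three outcomes of the present lemma, and rule out its outcome~(i) separately. Concretely, let $s\geq 0$ be the largest integer with $c^{2^s}\geq \chi(G)^{-1/4}$. This exists because $\chi(G)\geq c^{-4}$ gives $c\geq \chi(G)^{-1/4}$, and $c^{2^s}\to 0$. Every density parameter $c^{2^r}$ with $0\le r\le s$ then lies in $[\chi(G)^{-1/4},c]$, and maximality gives $c^{2^{s+1}}<\chi(G)^{-1/4}$. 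Throughout I use $\phi_{r,s}(c)\geq \phi_s(c)\geq 1-2c^4$ from Lemma~\ref{lem:phi-bound}, so that $\phi_{r,s}(c)\lambda\geq \mu$.

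The translations are routine.
\begin{itemize}
\item Outcome~(ii) of Lemma~\ref{lem:iterated-decomp} gives a complete pair with both sides at least $\mu c^3\chi(G)>2^{-24}\chi(G)$ by \eqref{ineq:const8}. This is outcome~(i) here.
\item Outcome~(iv) with index $r$ gives $\epsilon=c^{2^r}\in[\chi(G)^{-1/4},c]$ and an $(\epsilon,\chi)$-dense $F$ with $\chi(F)\geq \phi_{r,s}(c)\lambda\epsilon^2\chi(G)\geq \mu\epsilon^2\chi(G)$. This is outcome~(iii).
\item For outcome~(iii) with index $r\geq1$, put $x:=c^{2^r}\le c^2$ and $y:=4x<1/4$. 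Then $\chi(Y)\geq(1-2c^{2^{r+2}})(1-3x)\chi(G)\geq(1-4x)\chi(G)=(1-y)\chi(G)$, since $2x^4\le x$. Also $\chi(X)\geq \mu x^3\chi(G)$, and this exceeds $y^{24}\chi(G)=4^{24}x^{24}\chi(G)$ because $\mu>c^{18}\ge 4^{24}x^{21}$, using \eqref{ineq:const11} and the tiny size of $x\le c^2$ (with $c<1/50$). So we get outcome~(ii).
\end{itemize}
If the exponent bookkeeping is tight anywhere, I will instead use $y:=3x+2x^4$ directly, which only needs the stronger slack $\mu x^3\ge y^{24}$.

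The main obstacle is outcome~(i) of Lemma~\ref{lem:iterated-decomp}: an anticomplete pair $(A,B)$ with $\chi(A),\chi(B)\geq(1-2e)\chi(G)$, where $e=c^{2^{s+1}}<\chi(G)^{-1/4}$. This is not immediately contradictory.

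My first attempt is to reduce to a vertex-critical situation. Replace $G$ by a minimal induced subgraph $G'$ with $\chi(G')=\chi(G)$; all outcomes found in $G'$ transfer to $G$, and $G'$ is connected. If Lemma~\ref{lem:iterated-decomp} returns (i) in $G'$, take components $A_0\subseteq A$, $B_0\subseteq B$ of maximum chromatic number and a minimal cutset $Z$ separating them. As in Claim~\ref{clm:initial-cutset}, each vertex of $Z$ is complete to $A_0$ or to $B_0$ by Lemma~\ref{lem:pure-to-one}. If the part of $Z$ complete to $A_0$ (or to $B_0$) has chromatic number at least $2^{-24}\chi(G)$, outcome~(i) holds, since $\chi(A_0),\chi(B_0)\ge(1-2e)\chi(G)$ is large. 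Otherwise $\chi(Z)$ is small. The aim is then to use criticality, together with $\chi(A_0)<\chi(G')$ and $\chi(B_0)<\chi(G')$ (as both are proper induced subgraphs), to show that $\chi(G')\le \max_C\chi(C)+\chi(Z)$ over components $C$ of $G'\setminus Z$ is incompatible with the estimates. I expect this step to need care, possibly by recoloring $Z$.

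If that fails, the fallback is a descent. Apply the lemma to $G[A]$ with its own choice of $s$, repeatedly, and track the product of the factors $(1-2e_j)$. The $e_j$ are at most $\chi^{-1/4}$ at each scale and the chromatic number strictly drops, so I would need to show that the accumulated loss stays above the slack present in \eqref{ineq:const8} and \eqref{ineq:const11}.
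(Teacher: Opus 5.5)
\textbf{There is a genuine gap: you leave outcome (i) of Lemma~\ref{lem:iterated-decomp} unresolved.} Your handling of its outcomes (ii)--(iv) is correct. In the unbalanced case your choice $y=4c^{2^r}$ differs from the paper's $y=c^{2^{r-1}}$, but both satisfy the required inequalities.

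Your first attempt, seeking a contradiction from vertex-criticality, does not work as stated. The cutset $Z$ need not be a clique, so you cannot glue colourings of the pieces $C\cup Z$. The inequality $\chi(G')\le\max_C\chi(C)+\chi(Z)$, combined with $\chi(C)\le\chi(G')-1$, only gives $\chi(Z)\ge 1$, which is no contradiction. Looking for a balanced complete pair at that point is also unnecessary. The descent fallback is too unspecified to assess.

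\textbf{The missing idea.} Outcome (ii) of the statement lets $X$ be a single vertex, because $y$ may depend on $\chi(G)$. With $e=c^{2^{s+1}}<\chi(G)^{-1/4}$, put $y:=2e$. (The paper instead takes $y:=\chi(G)^{-1/8}$, after checking that $2e<\chi(G)^{-1/8}$.) Then $0<y\le 2c^2<1/4$, and
$y^{24}\chi(G)<2^{24}\chi(G)^{-5}\le 1$.

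You already had the right set-up, and no criticality is needed. Pass to a component of maximum chromatic number, so $G$ is connected. Choose an anticomplete pair $(A,B)$ with $G[A],G[B]$ connected, $\chi(A),\chi(B)\ge(1-2e)\chi(G)$, and $|A|+|B|$ maximal. Take a minimal cutset $Z$ separating $A$ and $B$. Then $A$ and $B$ are components of $G\setminus Z$, and every $v\in Z$ has neighbours in both, hence is complete to $A$ or to $B$ by Lemma~\ref{lem:pure-to-one}.

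For any single $v\in Z$, the pair $(\{v\},N(v))$ is complete. It satisfies $\chi(\{v\})=1\ge y^{24}\chi(G)$ and $\chi(N(v))\ge(1-2e)\chi(G)=(1-y)\chi(G)$. This is outcome (ii), and it is exactly how the paper finishes the proof.
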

\begin{proof}
	Without loss of generality, we may assume that $G$ is connected.
	Put $w:=\chi(G)$. Let $s\geq0$ be maximal such that \(c^{-2^s}\leq w^{1/4}\).
	Such an $s$ exists because $w\geq c^{-4}$. Apply Lemma~\ref{lem:iterated-decomp}.
	If its second outcome occurs, Lemma~\ref{lem:phi-bound} and \eqref{ineq:const8} give \(\chi(P),\chi(Q)
	\geq(1-2c^4)\lambda c^3w
	=\mu c^3w
	>2^{-24}w\), which is outcome~\textup{(i)}.
	Suppose that its third outcome occurs for an integer $r\in[1,s]$, and set
	$y:=c^{2^{r-1}}$.
	Since $c<1/4$ by \eqref{ineq:lambda1}, we have $0<y\leq c<1/4$ and $c^{2^r}=y^2$.
	By Lemma~\ref{lem:phi-bound}, \(\phi_{r,s}(c)\lambda\geq\mu\).
	Therefore \(\chi(X)\geq\mu y^6w\geq y^{24}w\), where the last inequality follows from $y^{18}\leq c^{18}<\mu$ by \eqref{ineq:const11}. Also, \(\chi(Y) \geq(1-2y^8)(1-3y^2)w \geq(1-y)w\); in fact, \((1-2y^8)(1-3y^2)-(1-y) =y\bigl(1-3y-2y^7+6y^9\bigr)>0\),
	because $0<y<1/4$ gives
	$1-3y-2y^7+6y^9>1-3/4-2/4^7>0$.
	Thus outcome~\textup{(ii)} holds.
	Suppose that the fourth outcome of Lemma~\ref{lem:iterated-decomp} occurs for an integer
	$r\in[0,s]$. Put $\epsilon:=c^{2^r}$. Then $\epsilon\leq c$, while \(\epsilon\geq c^{2^s}\geq w^{-1/4}\) by the definition of $s$. Moreover, Lemma~\ref{lem:phi-bound} gives \(\chi(F)\geq\mu\epsilon^2w\).
	This is outcome~\textup{(iii)}.
	
	It remains to consider the first outcome of Lemma~\ref{lem:iterated-decomp}. The maximality of
	$s$ gives $c^{-2^{s+1}}>w^{1/4}$. Since $w^{1/8}\geq c^{-1/2}>2$, we have \(c^{-2^{s+1}}>2w^{1/8}\) and hence \(2c^{2^{s+1}}<w^{-1/8}\).
	Thus $G$ contains an anticomplete pair $(A,B)$ with \(\chi(A),\chi(B)>(1-w^{-1/8})w\).
	Among all such pairs, choose one for which $G[A]$ and $G[B]$ are connected and
	$|A|+|B|$ is maximal. Let $S$ be a minimal nonempty cutset separating $A$ and $B$.
	The maximality of $|A|+|B|$ ensures that $G[A]$ and $G[B]$ are components of
	$G\setminus S$. Every vertex $v\in S$ has a neighbor in each of $A$ and $B$, so
	Lemma~\ref{lem:pure-to-one} implies that $v$ is complete to $A$ or to $B$.
	Consequently, \(\chi(N(v))>(1-w^{-1/8})w\).
	Set $X:=\{v\}$, $Y:=N(v)$, and $y:=w^{-1/8}$. Since $w\geq c^{-4}$, we have
	$y\leq c^{1/2}<1/4$.
	Furthermore, \(\chi(X)=1\geq w^{-2}=y^{24}w\) and \(\chi(Y)>(1-y)w\).
	This is outcome~\textup{(ii)}, proving Lemma~\ref{lem:outer-decomp}.
\end{proof}

\section{A sharper density increment}\label{sec:density}

We now improve the dense part of the argument.
We begin with:
%The first lemma retains the full counting strength of the partition step instead of weakening it to a square-root estimate.

%\subsection{A stronger partition lemma}

\begin{lemma}\label{lem:partition}
	Let $z\in(0,1/2)$, let $G$ be a graph, and let $(A_1,\ldots,A_m)$ be a partition of $V(G)$ into
	nonempty sets satisfying $\chi(A_i)\leq z\cdot\chi(G)$ for every $i\in[m]$. Then there exist pairwise
	disjoint nonempty index sets $I_1,\ldots,I_k\subseteq[m]$ such that \(k\geq\frac{1-z}{2z}\) and \(\chi\left(\bigcup_{i\in I_j}A_i\right)\geq z\cdot\chi(G)\) for every \(j\in[k]\).
\end{lemma}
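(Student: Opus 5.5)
We will build the index sets greedily and use subadditivity of $\chi$ over unions, namely $\chi(U\cup W)\le\chi(U)+\chi(W)$. Write $w:=\chi(G)$.

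Process the indices $1,2,\ldots,m$ in order, maintaining a current \emph{bucket} $I$, initially empty. Add the next index $i$ to $I$. As soon as $\chi\bigl(\bigcup_{i\in I}A_i\bigr)\ge zw$, close the bucket as some $I_j$ and start a new empty bucket. When the indices run out, the last bucket $I_{\mathrm{rest}}$ is unclosed, possibly empty, and satisfies $\chi\bigl(\bigcup_{i\in I_{\mathrm{rest}}}A_i\bigr)<zw$.

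The closed buckets $I_1,\ldots,I_k$ are pairwise disjoint and nonempty, and each has union of chromatic number at least $zw$. We bound each closed bucket from above as well. Just before its last index $i^*$ was added, the bucket had union of chromatic number $<zw$. Adding $A_{i^*}$, which has $\chi(A_{i^*})\le zw$, gives
\[
\chi\Bigl(\bigcup_{i\in I_j}A_i\Bigr)<2zw .
\]
Since the $A_i$ partition $V(G)$, subadditivity yields
\[
w\le\sum_{j=1}^k\chi\Bigl(\bigcup_{i\in I_j}A_i\Bigr)+\chi\Bigl(\bigcup_{i\in I_{\mathrm{rest}}}A_i\Bigr)<2zwk+zw .
\]
Here the second term is absent if the remainder bucket is empty.

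Rearranging gives $k>\frac{1-z}{2z}$, which proves the bound. The hypothesis $z<1/2$ only ensures that $\frac{1-z}{2z}>\tfrac12$. In particular $k\ge1$, which is also consistent with the inequality $w<2zwk+zw$ forcing $k\ge1$ whenever $w>0$. The degenerate case $w=0$ cannot occur, since the $A_i$ are nonempty.

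The only delicate step is the per-bucket upper bound $2zw$. It relies on two facts: the bucket was below $zw$ just before its final addition, and each single part contributes at most $zw$. With that in hand, the counting is immediate.
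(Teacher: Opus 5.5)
Your proof is correct and is essentially the paper's argument. Both group the parts into blocks whose union has chromatic number between $z\chi(G)$ and $2z\chi(G)$, allow at most one leftover block below $z\chi(G)$, and conclude by subadditivity that $\chi(G)\le z\chi(G)+2zk\chi(G)$. The only difference is that you build the blocks greedily in sequence, whereas the paper takes a partition of $[m]$ into the fewest groups of chromatic number at most $2z\chi(G)$ and notes that two small groups could be merged.
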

\begin{proof}
	Choose a partition $J_1\cup\cdots\cup J_s=[m]$ with $s$ minimum subject to \(\chi\left(\bigcup_{i\in J_j}A_i\right)\leq2z\cdot\chi(G)\) for every \(j\in[s]\).
	Such a partition exists by taking all $J_j$ to be singletons. By the minimality of $s$, at most
	one group has chromatic number at most $z\cdot\chi(G)$, because two such groups could be merged.
	Let $k$ be the number of groups with chromatic number greater than $z\cdot\chi(G)$. If all groups
	are of this type, then \(\chi(G)\leq2zk\cdot\chi(G)\), so $k\geq(2z)^{-1}>(1-z)/(2z)$. Otherwise there is one remaining group of chromatic number at
	most $z\cdot\chi(G)$, and \(\chi(G)\leq z\cdot\chi(G)+2zk\cdot\chi(G)\), which gives $k\geq(1-z)/(2z)$.
	Taking the $k$ high-chromatic groups as	$I_1,\ldots,I_k$ proves Lemma~\ref{lem:partition}.
\end{proof}

A \textit{blockade} in $G$ is a sequence $(B_1,\ldots,B_k)$ of pairwise disjoint nonempty
subsets of $V(G)$. Its \textit{length} is $k$, and its \textit{mass} is $\min\{\chi(B_i):~i\in[k]\}$. The blockade is \textit{complete} if every two distinct blocks are
complete to one another. It is \textit{restrained} if \(\sum_{i=1}^k\omega(B_i)\leq \omega(G)\).
Every complete blockade is restrained.
The next result strengthens \cite[Lemma~5.5]{Nguyen2025}.

\begin{lemma}\label{lem:local-blockade}
	Let $t\in(0,1/4)$, let $G$ be $\{P_5,C_5\}$-free, let $v\in V(G)$, and let
	$A\subseteq N(v)$ and $B\subseteq V(G)\setminus N[v]$ be nonempty. Suppose that \(\chi(A\setminus N(u))\leq t\cdot\chi(A)\) for every \(u\in B\).
	Then at least one of the following holds:
	\begin{enumerate}[label=\textup{(\roman*)}]
		\item there is a set $D\subseteq A$ such that \(\chi(D)\geq(1-\sqrt t)\chi(A)\) and $(B,D)$ is restrained;
		\item $G[A]$ contains a complete blockade of length $k$ and mass at least $t\cdot\chi(A)$, where \(k\geq\frac{1-\sqrt t}{2\sqrt t}\).
	\end{enumerate}
\end{lemma}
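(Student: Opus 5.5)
The plan is to produce $D$ as the common neighbourhood in $A$ of a maximum clique of $B$. If that set is too small, the vertices of $A$ that it misses should split into pairwise complete pieces, and Lemma~\ref{lem:partition} will then turn these pieces into a long complete blockade.

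\textbf{Setting up $D$.} Let $K$ be a maximum clique of $G[B]$, so $|K|=\omega(B)$. For $u\in K$ put $M_u:=A\setminus N(u)$; by hypothesis $\chi(M_u)\le t\,\chi(A)$. Put $M:=\bigcup_{u\in K}M_u$ and $D:=A\setminus M$, so $D$ is complete to $K$. Then every clique of $G[D]$ together with $K$ is a clique of $G$. Hence $\omega(B)+\omega(D)\le\omega(G)$, and $(B,D)$ is restrained. So if $\chi(M)\le\sqrt t\,\chi(A)$ we get $\chi(D)\ge(1-\sqrt t)\chi(A)$ and outcome~(i). From now on assume $\chi(M)>\sqrt t\,\chi(A)$.

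\textbf{Key structural claim, from $C_5$-freeness.} Let $u,u'\in K$ be distinct, and take $a\in M_u\setminus M_{u'}$ and $a'\in M_{u'}\setminus M_u$. Then $a$ and $a'$ are adjacent. Otherwise $v$--$a$--$u'$--$u$--$a'$--$v$ is an induced $C_5$:
\begin{itemize}
\item its edges are present because $v$ is complete to $A$, $a u'\in E$ and $a'u\in E$ by the choice of $a,a'$, and $uu'\in E$ since $K$ is a clique;
\item its chords are absent because $v$ has no neighbours in $B$, $au\notin E$ and $a'u'\notin E$ by the choice of $a,a'$, and $aa'\notin E$ by assumption.
\end{itemize}
So for distinct $u,u'\in K$, the "private parts" $M_u\setminus M_{u'}$ and $M_{u'}\setminus M_u$ are complete to each other. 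In particular, let $Q_u$ be the set of vertices of $M$ whose unique non-neighbour in $K$ is $u$. Then the sets $Q_u$ are pairwise complete, and each satisfies $\chi(Q_u)\le t\,\chi(A)$.

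\textbf{Building the blockade.} Suppose first that $M=\bigcup_u Q_u$. Apply Lemma~\ref{lem:partition} to $G[M]$ with the partition $(Q_u)$ and $z:=t\chi(A)/\chi(M)<\sqrt t<1/2$. This gives disjoint groups, each of chromatic number at least $z\chi(M)=t\chi(A)$, and their number is at least $(1-z)/(2z)\ge(1-\sqrt t)/(2\sqrt t)$. Unions of pairwise complete sets indexed by disjoint index sets are again pairwise complete. So the groups form a complete blockade in $G[A]$ of the required length and mass, which is outcome~(ii).

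\textbf{Main obstacle: vertices missing at least two members of $K$.} These vertices spoil the partition into the $Q_u$, and I expect handling them to be the hardest step. Here is what I would try first. Let $a$ miss $u_1,u_2\in K$. For any $a'\in Q_{u_1}$, the vertex $a'$ is adjacent to $u_2$ while $a$ is not. Running the same $C_5$ and $P_5$ chase through $v,u_1,u_2$ should force $a$ to be complete to $Q_{u_1}$, or to be comparable with it in a nested way.

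The hoped-for conclusion is this. Assign each vertex of $M$ to the least $u$ (in a fixed order of $K$) that it misses, forming sets $A_u\subseteq M_u$. Then show these sets are still pairwise complete, using $A_u\subseteq M_u\setminus M_{u'}$ for $u'<u$, together with a nesting statement for later indices.

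If that fails, the fallback is to change the choice of $K$ (and hence of $D$), taking a maximum clique of $B$ that maximises $\chi(D)$. Then argue that multiply-missing vertices can be absorbed into $D$ by swapping vertices of $K$, which preserves the restrained property. After that, the argument of the previous step completes the proof.
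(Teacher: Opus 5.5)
\paragraph{Verdict: genuine gap.} Your setup matches the paper's exactly: $K$ is a maximum clique of $G[B]$, $D$ is the common neighbourhood of $K$ in $A$, the threshold is $\sqrt t$, and the last step uses Lemma~\ref{lem:partition}. The special case $M=\bigcup_u Q_u$ is also handled correctly; your choice $z=t\chi(A)/\chi(M)<\sqrt t$ works once empty $Q_u$ are discarded.

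The problem is the general case, where some vertices of $M$ miss two or more vertices of $K$. There you offer only intentions (``should force'', ``hoped-for conclusion''). So as written, outcome~(ii) is not established. The fallback of re-choosing $K$ and ``absorbing'' multiply-missing vertices into $D$ by swaps has no supporting argument, and it is not needed.

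\paragraph{The missing step.} You need a one-line $P_5$ chase to complement your $C_5$ claim. For distinct $u,u'\in K$, the \emph{whole} of $M_u$ is complete to $M_{u'}\setminus M_u$, not just $M_u\setminus M_{u'}$. To see this, take $a\in M_u$ and $a'\in M_{u'}\setminus M_u$ with $aa'\notin E(G)$.
\begin{itemize}
\item If $au'\in E(G)$, then $a-v-a'-u-u'-a$ is an induced $C_5$; this is your argument.
\item If $au'\notin E(G)$, then $a-v-a'-u-u'$ is an induced $P_5$. Its edges are $av,va',a'u,uu'$. Its non-edges are $aa'$, $au$, $au'$, $vu$, $vu'$ and $a'u'$.
\end{itemize}

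\paragraph{Finishing the proof.} Fix an order $u_1,\ldots,u_\ell$ of $K$ and set $A_i:=M_{u_i}\setminus(M_{u_1}\cup\cdots\cup M_{u_{i-1}})$. This is your ``least missed vertex'' assignment. For $i<j$ we have $A_i\subseteq M_{u_i}$ and $A_j\subseteq M_{u_j}\setminus M_{u_i}$, so $A_i$ is complete to $A_j$; no nesting statement is required. Discard the empty $A_i$ (the paper instead takes a minimal subfamily of $K$ whose non-neighbourhoods cover $M$). The remaining sets partition $M$ with $\chi(A_i)\le t\,\chi(A)$, and your partition-lemma step then goes through verbatim. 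This is precisely the paper's proof.
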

\begin{proof}
	Let $K$ be a maximum clique in $G[B]$, and let $S$ be the set of vertices of $A$ having a
	nonneighbor in $K$. If $\chi(S)<\sqrt t\,\chi(A)$, then $D:=A\setminus S$ satisfies the
	chromatic lower bound in outcome~\textup{(i)}. Since $K$ is complete to $D$, \(\omega(B)+\omega(D)=|K|+\omega(D)\leq\omega(G)\), so $(B,D)$ is restrained.
	We may therefore assume that $\chi(S)\geq\sqrt t\,\chi(A)$. Choose
	$v_1,\ldots,v_\ell\in K$ with $\ell$ minimum such that every vertex of $S$ is nonadjacent to at
	least one $v_i$. For each $i\in[\ell]$, let $A_i$ consist of the vertices of $S$ that are
	nonadjacent to $v_i$ and adjacent to $v_1,\ldots,v_{i-1}$. Then
	$(A_1,\ldots,A_\ell)$ is a partition of $S$ into nonempty sets, and \(\chi(A_i)\leq t\cdot\chi(A)\leq\sqrt t\,\chi(S)\) for every \(i\in[\ell]\).
	Apply Lemma~\ref{lem:partition} to $G[S]$ with $z=\sqrt t$. We obtain disjoint index sets
	$I_1,\ldots,I_k$ with \(k\geq\frac{1-\sqrt t}{2\sqrt t}\) such that, for $C_j:=\bigcup_{i\in I_j}A_i$, \(\chi(C_j)\geq\sqrt t\,\chi(S)\geq t\cdot\chi(A)\) for every \(j\in[k]\).
	
	It remains to show that the blocks $C_1,\ldots,C_k$ are pairwise complete. Suppose that
	nonadjacent vertices $u_1\in C_{j_1}$ and $u_2\in C_{j_2}$ exist for distinct $j_1,j_2$. Choose
	$i_1\in I_{j_1}$ and $i_2\in I_{j_2}$ with $u_1\in A_{i_1}$ and $u_2\in A_{i_2}$, and assume
	$i_1<i_2$. Then $u_2$ is adjacent to $v_{i_1}$.
	If $u_1$ is nonadjacent to $v_{i_2}$, then \(u_1-v-u_2-v_{i_1}-v_{i_2}\) gives an induced $P_5$.
	If $u_1$ is adjacent to $v_{i_2}$, the same five vertices in cyclic order
	give an induced $C_5$.
	Both cases lead to a contradiction.
	Hence the blockade is complete, and outcome~\textup{(ii)} holds.
	This proves Lemma~\ref{lem:local-blockade}.
\end{proof}

%\subsection{The density-increment step}

\begin{lemma}\label{lem:density-step}
	Let $y\in(0,c]$, and let $G$ be a $(y,\chi)$-dense $\{P_5,C_5\}$-free graph. Then at least one
	of the following holds:
	\begin{enumerate}[label=\textup{(\roman*)}]
		\item $G$ contains a $(y^{6/5},\chi)$-dense induced subgraph $H$ with \(\chi(H)\geq\beta\cdot\chi(G)\);
		\item $G$ contains a restrained blockade of length $k$ and mass at least $k^{-6}\chi(G)$,
		where \(k\geq\alpha y^{-1/2}\).
	\end{enumerate}
\end{lemma}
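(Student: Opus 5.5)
The plan is to build a restrained blockade greedily by peeling off non-neighbourhoods, in the spirit of \cite[Section~5]{Nguyen2025}. Put $w:=\chi(G)$, $x:=y^{6/5}$ and $k_0:=\lceil \alpha y^{-1/2}\rceil$, and assume outcome~(i) fails. I will construct nested sets $V(G)=D_0\supseteq D_1\supseteq\cdots$ and blocks $B_1,B_2,\ldots$ with two properties:
\begin{itemize}
\item $B_j\cup D_j\subseteq D_{j-1}$, $B_j\cap D_j=\emptyset$, and $(B_j,D_j)$ is restrained in $G[D_{j-1}]$, i.e.\ $\omega(B_j)+\omega(D_j)\le\omega(D_{j-1})$;
\item $\chi(D_j)$ stays at least $\beta w$.
\end{itemize}
Telescoping the first property gives $\sum_{j\le k}\omega(B_j)+\omega(D_k)\le\omega(G)$, so $(B_1,\ldots,B_k)$ is a restrained blockade in $G$.

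\textbf{One step.} Suppose $\chi(D_{j-1})\ge\beta w$. Every $u\in D_{j-1}$ satisfies $\chi(D_{j-1}\setminus N[u])\le\chi(G\setminus N[u])<yw\le (y/\beta)\chi(D_{j-1})$. Since (i) fails, $G[D_{j-1}]$ is not $(x,\chi)$-dense. So there is $v_j\in D_{j-1}$ with $B_j:=D_{j-1}\setminus N[v_j]$ satisfying $\chi(B_j)\ge x\chi(D_{j-1})\ge\beta y^{6/5}w$. Put $A:=N(v_j)\cap D_{j-1}$; then $\chi(A)\ge\chi(D_{j-1})-yw-1$. For $u\in B_j$ we get $\chi(A\setminus N(u))<yw\le t\,\chi(A)$ with $t:=yw/\chi(A)$, and $t\le 2y<1/4$ since $\beta>1/2$ and $y\le c$. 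Now apply Lemma~\ref{lem:local-blockade} inside $G[D_{j-1}]$.
\begin{itemize}
\item If its outcome~(ii) occurs, we obtain a complete, hence restrained, blockade in $G$. Its length is at least $(1-\sqrt{2y})/(2\sqrt{2y})\ge \frac{1}{2\sqrt2}y^{-1/2}-\frac{\sqrt c}{2}y^{-1/2}\ge\alpha y^{-1/2}$ by \eqref{ineq:const2}. Its mass is at least $t\chi(A)=yw$. Truncating it to length exactly $k_0$, we need $yw\ge k_0^{-6}w$. This holds since $k_0^{-6}\le\alpha^{-6}y^3\le y$, using $\alpha c^{-1/2}\ge 3$ from \eqref{ineq:const1}. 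This gives outcome~(ii) and we stop.
\item Otherwise its outcome~(i) occurs, and we take $D_j:=D$. Then $(B_j,D_j)$ is restrained and $\chi(D_j)\ge(1-\sqrt t)\chi(A)$, so the loss per step is $\chi(D_{j-1})-\chi(D_j)\le\sqrt{t}\,\chi(A)+yw+1$.
\end{itemize}

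\textbf{The bookkeeping (main obstacle).} I must show that $k_0$ steps can be performed before $\chi(D_j)$ drops below $\beta w$. Using $\sqrt t\,\chi(A)=\sqrt{yw\chi(A)}$ and $\chi(A)\le w$ only gives a per-step loss of about $\sqrt y\,w$. Sharpening this with $\chi(D_{j-1})\ge\beta w$ should give a per-step loss of at most $(\sqrt2+\sqrt c/\beta)\sqrt y\,w<\gamma\sqrt y\,w$ by \eqref{ineq:const4}. Here the $yw+1$ terms are absorbed because $y\le\sqrt c\sqrt y$ and $w$ is large. After $k_0\approx\alpha y^{-1/2}$ steps the total loss is then below $\gamma\alpha w$. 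By \eqref{ineq:const5}, $\chi(D_j)\ge(1-\gamma\alpha)w>\beta w$ throughout, so every step is legal. Some care is needed with the ceiling in $k_0$ and with the additive $+1$ terms; if they do not fit, I would rather run exactly $\lfloor\alpha y^{-1/2}\rfloor$ steps.

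\textbf{Verifying the mass.} If all $k_0$ steps finish in the peeling case, the blockade $(B_1,\ldots,B_{k_0})$ is restrained with mass at least $\beta y^{6/5}w$. It remains to check $\beta y^{6/5}\ge k_0^{-6}$, i.e.\ roughly $\beta\alpha^{6}y^{-9/5}\ge 1$ after rounding $k_0$. This is exactly what \eqref{ineq:const7} (together with \eqref{ineq:const1} and \eqref{ineq:const6}) is tailored to provide. This yields outcome~(ii).
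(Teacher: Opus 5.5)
Your argument is the paper's proof written as an explicit iteration. The paper chooses a maximal restrained blockade $(B_0,\ldots,B_m)$ with $\chi(B_m)\ge(1-\gamma\sqrt y)^m\chi(G)$. It splits $B_m$ at a vertex witnessing that $G[B_m]$ is not $(y^{6/5},\chi)$-dense, and applies Lemma~\ref{lem:local-blockade} with $t=2y$. Your $D_j$ is that last block, and your adaptive $t=yw/\chi(A)$ is a legitimate, slightly stronger variant. The skeleton is sound, but three of your numerical checks are wrong as written and need repair.

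\textbf{The $-1$ in $\chi(A)$.} Drop the $-1$ in $\chi(A)\ge\chi(D_{j-1})-yw-1$. Since $v_j$ has no neighbour in the nonempty set $B_j$, $\chi(B_j\cup\{v_j\})=\chi(B_j)<yw$. Hence $\chi(A)>\chi(D_{j-1})-yw\ge(\beta-y)w$, and $t<y/(\beta-c)<2y$ by \eqref{ineq:const3}; note that $\beta>1/2$ alone is not enough. If you keep the $-1$, then $t\le 2y$ would require $1/w\le\beta-c-\frac12\approx2\cdot10^{-4}$. You have no lower bound on $w$ beyond $yw>1$, which comes from $1\le\chi(B_j)<yw$. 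The margin in \eqref{ineq:const2} is too thin to tolerate a larger $t$ in the complete-blockade branch.

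\textbf{The bookkeeping.} No ``sharpening'' is needed. The loss per step is at most $(\chi(D_{j-1})-\chi(A))+\sqrt{yw\chi(A)}<yw+\sqrt y\,w\le(1+\sqrt c)\sqrt y\,w<\gamma\sqrt y\,w$. Your $(\sqrt2+\sqrt c/\beta)\sqrt y\,w$ is the weaker bound the paper gets from fixing $t=2y$. Before the $k_0$-th split only $k_0-1<\alpha y^{-1/2}$ losses have occurred. So $\chi(D_{j-1})>(1-\gamma\alpha)w>\beta w$ for every $j\le k_0$, by \eqref{ineq:const5}. The ceiling is therefore harmless. Your fallback of running $\lfloor\alpha y^{-1/2}\rfloor$ steps should not be used, since it can violate $k\ge\alpha y^{-1/2}$.

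\textbf{The final mass check.} This cannot go through $k_0^{-6}\le\alpha^{-6}y^3$. That route reduces to $\beta\alpha^6\ge y^{9/5}$, which is false for $y$ near $c$: $\beta\alpha^6\approx2.6\cdot10^{-4}$, while $c^{9/5}\approx8.6\cdot10^{-4}$. Instead, apply $y\ge\alpha^2k_0^{-2}$ to the left-hand side: $\beta y^{6/5}\ge\beta\alpha^{12/5}k_0^{-12/5}$. Since $k_0\ge3$, you get $\beta\alpha^{12/5}k_0^{18/5}\ge\beta(27\alpha^2)^{6/5}>\frac12\cdot 2^{6/5}>1$. This gives $\beta y^{6/5}\ge k_0^{-6}$, and it is how the paper uses \eqref{ineq:const7}.
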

\begin{proof}
	Let $m\geq0$ be maximal such that $G$ contains a restrained blockade
	$(B_0,B_1,\ldots,B_m)$ satisfying
	\begin{align}
		\chi(B_m)&\geq(1-\gamma\sqrt y)^m\chi(G), \label{eq:terminal-block}\\
		\chi(B_i)&\geq\beta y^{6/5}\chi(G)
		\quad\text{for every }0\leq i<m. \label{eq:stored-blocks}
	\end{align}
	The choice $m=0$ and $B_0=V(G)$ shows that such an $m$ exists. Since every block is
	nonempty and the blocks are pairwise disjoint, only finitely many values of $m$ are possible;
	hence a maximum exists.
	Suppose first that $m\geq\alpha y^{-1/2}$. By \eqref{ineq:const1},
	$m\geq\alpha c^{-1/2}>2$, so $m\geq3$. The blockade
	$(B_0,\ldots,B_{m-1})$ has length $m$. Since $m\geq\alpha y^{-1/2}$,
	$y\geq\alpha^2m^{-2}$, and therefore \(\beta y^{6/5}
	\geq\beta\alpha^{12/5}m^{-12/5}
	\geq m^{-6}\).
	For the last inequality, it is enough to verify
	$\beta\alpha^{12/5}m^{18/5}\geq1$; this follows from $m\geq3$ and
	\eqref{ineq:const7}. Hence outcome~\textup{(ii)} holds.
	
	We may now assume that $m<\alpha y^{-1/2}$. Since $y\leq c$ and
	$\gamma\sqrt c<(17/10)(141/1000)=2397/10\,000<1$, Bernoulli's inequality ($(1-x)^m\ge 1-mx$ for $x\in[0,1]$), together with \eqref{ineq:const5} and \eqref{eq:terminal-block}, gives
	\[
	\chi(B_m)
	\geq(1-\gamma\sqrt y)^m\chi(G)
	\geq(1-m\gamma\sqrt y)\chi(G)
	>(1-\gamma\alpha)\chi(G)
	>\beta\chi(G).
	\]
	If $G[B_m]$ is $(y^{6/5},\chi)$-dense, then outcome~\textup{(i)} holds. Otherwise choose
	$v\in B_m$ such that \(\chi(B_m\setminus N[v])\geq y^{6/5}\chi(B_m)\).
	Set \(B:=B_m\setminus N[v]\) and \(A:=B_m\cap N(v)\).
	The set $B$ is nonempty. Since $G$ is $(y,\chi)$-dense, \(\chi(B)<y\cdot\chi(G)<\frac{y}{\beta}\chi(B_m)\).
	As $v$ is anticomplete to $B$, the set $B\cup\{v\}$ has chromatic number $\chi(B)$, and hence
	\begin{equation}\label{eq:A-large}
		\chi(A)\geq\chi(B_m)-\chi(B)
		>\left(1-\frac{y}{\beta}\right)\chi(B_m).
	\end{equation}
	Moreover, for every $u\in B$, \(\chi(A\setminus N(u))<y\cdot\chi(G)
	<\frac{y}{\beta-y}\chi(A)
	<2y\cdot\chi(A)\), where the last inequality follows from \eqref{ineq:const3}. Apply Lemma~\ref{lem:local-blockade} to the induced graph $G[B_m]$, with
	$t=2y$ and with the sets $A,B$ defined above.
	
	Suppose first that it gives a set $D\subseteq A$ for which $(B,D)$ is restrained in $G[B_m]$
	and \(\chi(D)\geq(1-\sqrt{2y})\chi(A)\).
	Using \eqref{eq:A-large},
	\begin{align*}
		\chi(D)
		&>(1-\sqrt{2y})\left(1-\frac{y}{\beta}\right)\chi(B_m)\\
		&\geq\left(1-\sqrt{2y}-\frac{y}{\beta}\right)\chi(B_m)\\
		&\geq(1-\gamma\sqrt y)\chi(B_m),
	\end{align*}
	where the last step uses \eqref{ineq:const4}. Also, \(\chi(B)\geq y^{6/5}\chi(B_m)>\beta y^{6/5}\chi(G)\).
	Because $(B,D)$ is restrained in $G[B_m]$, replacing the terminal block $B_m$ by the ordered
	pair $B,D$ preserves restraint. The resulting blockade \((B_0,\ldots,B_{m-1},B,D)\) satisfies \eqref{eq:terminal-block} and \eqref{eq:stored-blocks} with $m+1$ in place of $m$.
	This contradicts the maximality of $m$.
	
	Therefore Lemma~\ref{lem:local-blockade} gives a complete blockade in $G[A]$ of length $k$
	and mass at least $2y\cdot\chi(A)$, where
	\[
	k\geq\frac{1-\sqrt{2y}}{2\sqrt{2y}}
	=\left(\frac{1}{2\sqrt2}-\frac{\sqrt y}{2}\right)y^{-1/2}
	>\alpha y^{-1/2}
	\]
	by \eqref{ineq:const2}. Its mass satisfies \(2y\chi(A)>2y(\beta-y)\chi(G)>y\chi(G)\) by \eqref{ineq:const3}. The lower bound on $k$ and \eqref{ineq:const1} imply $k\geq3$ and
	$y>\alpha^2k^{-2}$. Since
	\[
	\alpha^2 3^4=\frac{1\,610\,361}{250\,000}>1,
	\]
	we have $y>k^{-6}$. Hence the blockade has mass greater than $k^{-6}\chi(G)$. Since it is complete, it is also
	restrained, and outcome~\textup{(ii)} follows.
	This proves Lemma~\ref{lem:density-step}.
\end{proof}

We now convert Lemma~\ref{lem:density-step} into the form needed in the final combination. We
use a discrete sequence of density scales. This avoids a minor endpoint issue that would arise from
choosing a minimum over a continuous interval, because the definition of $(y,\chi)$-density uses a
strict inequality.

\begin{lemma}\label{lem:dense-blockade}
	Let $\epsilon\in(0,c]$, and let $G$ be an $(\epsilon,\chi)$-dense $\{P_5,C_5\}$-free graph with $\chi(G)>0$.
	Then $G$ contains a restrained blockade of length $k$ satisfying \(k\geq\min\left\{\alpha\epsilon^{-1/2},\chi(G)^{5/12}\right\}\) and mass at least \(\alpha^2k^{-8}\chi(G)\).
\end{lemma}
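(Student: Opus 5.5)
The plan is to iterate Lemma~\ref{lem:density-step} along the discrete sequence of scales $y_j:=\epsilon^{(6/5)^j}$, $j=0,1,2,\ldots$, so that the density parameter improves geometrically in the exponent while the chromatic number only loses a factor $\beta$ per step. Set $G_0:=G$, which is $(y_0,\chi)$-dense. Given a $(y_j,\chi)$-dense induced subgraph $G_j$ with $\chi(G_j)\geq\beta^j\chi(G)$ and $y_j\leq c$, apply Lemma~\ref{lem:density-step}. If outcome~\textup{(i)} occurs, we obtain $G_{j+1}$, which is $(y_{j+1},\chi)$-dense with $\chi(G_{j+1})\geq\beta^{j+1}\chi(G)$, and we continue. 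If outcome~\textup{(ii)} occurs, we stop with a restrained blockade in $G_j$ of length $k\geq\alpha y_j^{-1/2}\geq\alpha\epsilon^{-1/2}$ and mass at least $k^{-6}\chi(G_j)$. Restraint in $G_j$ passes to $G$, since $\omega(G_j)\leq\omega(G)$.

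The process must terminate. Each $G_j$ is nonempty, and $\chi(G_j)\geq1$, so $(y_j,\chi)$-density of $G_j$ is impossible once $y_j\chi(G_j)\leq 1$: a vertex $v$ would then need $\chi(G_j\setminus N[v])<1$, making $G_j$ a clique. Lemma~\ref{lem:dense-clique} handles this regime. More cleanly, I would stop the iteration at the first index $J$ with $y_J\leq\chi(G_J)^{-something}$, and in that case use Lemma~\ref{lem:dense-clique} to get $\omega(G_J)\geq\min\{y_J^{-1},\chi(G_J)\}$. A clique $K$ of size $k$ in $G_J$, cut into singleton blocks, is a complete (hence restrained) blockade of length $k$ with mass $1$. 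So the key task is to choose the stopping rule so that $k\geq\chi(G)^{5/12}$ and $1\geq\alpha^2k^{-8}\chi(G)$, that is, $k^8\geq\alpha^2\chi(G)$. To get this, take only $k:=\lceil\chi(G)^{5/12}\rceil$ vertices of the clique. This requires $k^8\geq\chi(G)^{10/3}\geq\alpha^2\chi(G)$, which is fine.

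The main accounting step is to bound the loss $\beta^j$ against the gain in $k$. When outcome~\textup{(ii)} fires at step $j\geq1$, we have $k\geq\alpha y_j^{-1/2}=\alpha\epsilon^{-(6/5)^j/2}$. We need $k^{-6}\beta^j\geq\alpha^2k^{-8}$, i.e.\ $\beta^jk^2\geq\alpha^2$. Since $k^2\geq\alpha^2y_j^{-1}$, it suffices that $\beta^j\geq y_j$. This holds because $y_j\leq c^{(6/5)^j}$ and $c^{(6/5)^j}\leq\beta^j$, using $c^{1/5}<\beta$ from \eqref{ineq:const6}: indeed $(6/5)^j\geq 1+j/5$ and $c^{1+j/5}\leq c^{j/5}<\beta^j$. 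For $j=0$ the bound is direct, since $k^{-6}\geq\alpha^2k^{-8}$ whenever $k\geq\alpha$. The length condition $k\geq\min\{\alpha\epsilon^{-1/2},\chi(G)^{5/12}\}$ is automatic in this case.

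The expected obstacle is the termination/clique case. The iteration continues only while $y_j\leq c$, which is automatic, and while $G_j$ is genuinely dense and large. I would stop at the first $j$ where $y_{j+1}^{-1}$ would exceed roughly $\chi(G)^{5/6}$, and at that step invoke Lemma~\ref{lem:dense-clique} on $G_j$ itself. This gives $\omega(G_j)\geq\min\{y_j^{-1},\chi(G_j)\}$. The work is to check, using $\beta^j\geq y_j$ and $y_j\geq y_{j+1}^{5/6}$, that both quantities are at least $\chi(G)^{5/12}$. Here $\chi(G_j)\geq\beta^j\chi(G)\geq y_j\chi(G)$, and $y_j^{-1}\approx\chi(G)^{(5/6)\cdot(5/6)}$ at the threshold. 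Tuning this threshold, possibly using $\chi(G_j)\geq y_j\chi(G)\geq\chi(G)^{1/2}$, to land exactly on the exponent $5/12$ is the delicate part. I would first try the threshold $y_j\geq\chi(G)^{-5/6}>y_{j+1}$, which gives $y_j^{-1}>\chi(G)^{(5/6)(5/6)}\geq\chi(G)^{5/12}$ and $\chi(G_j)\geq\chi(G)^{1/6}$. Since $\chi(G)^{1/6}$ falls short of $\chi(G)^{5/12}$, I would then adjust by stopping when $y_j\chi(G)$ first drops near $\chi(G)^{5/12}$, and recheck the estimates.
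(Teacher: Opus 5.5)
Your architecture matches the paper's: iterate Lemma~\ref{lem:density-step} along $y_{j+1}=y_j^{6/5}$, keep a lower bound on the current chromatic number that dominates $y_j\chi(G)$, convert outcome (ii) using $y_j\ge\alpha^2k^{-2}$, and finish with a clique from Lemma~\ref{lem:dense-clique}. Your invariant $\chi(G_j)\ge\beta^j\chi(G)$, together with $\beta^j\ge y_j$, is correct. The paper instead propagates $\chi(F)\ge y_j\chi(G)$ directly, since $\beta y_j>y_j^{6/5}$ by \eqref{ineq:const6}. The gap is the stopping rule, which is exactly where the exponent $5/12$ comes from, and you leave it open. The threshold you actually test, $y_J\ge\chi(G)^{-5/6}>y_{J+1}$, fails; as you note, it only gives $\chi(G_J)\ge\chi(G)^{1/6}$. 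Your proposed adjustment is never checked.

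The missing step is a simple balance. Write $w=\chi(G)$, and let $J$ be the largest index with $y_J\ge w^{-a}$. Then $y_J=y_{J+1}^{5/6}<w^{-5a/6}$ and $\chi(G_J)\ge y_Jw\ge w^{1-a}$. Lemma~\ref{lem:dense-clique} then gives $\omega(G_J)\ge\min\{y_J^{-1},\chi(G_J)\}\ge w^{5/12}$ provided $\tfrac12\le a\le\tfrac{7}{12}$. Applying it to $G_J$ directly, without first invoking Lemma~\ref{lem:density-step} at $J$, is fine: in the paper's outcome (i) case this is exactly the estimate used on $F$.

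The paper takes $a=\tfrac12$, which gives $y_J^{-1}>w^{5/12}$ and $\chi(G_J)\ge w^{1/2}$. Your fallback, ``stop when $y_j\chi(G)$ drops to about $\chi(G)^{5/12}$'', is $a=\tfrac{7}{12}$. It also works, since $\tfrac56\cdot\tfrac{7}{12}=\tfrac{35}{72}>\tfrac{5}{12}$.

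You must also treat the case where no such $J$ exists, i.e.\ $\epsilon<w^{-a}$; for $a=\tfrac12$ this is essentially the paper's first case $w\le\epsilon^{-2}$. There, apply Lemma~\ref{lem:dense-clique} to $G$ itself to get $\omega(G)\ge\min\{\epsilon^{-1},w\}\ge w^{a}\ge w^{5/12}$.

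In every clique case the full clique is a valid blockade of mass $1$, because $k\ge w^{5/12}$ gives $\alpha^2k^{-8}w\le\alpha^2w^{-7/3}<1$. Truncating to $\lceil w^{5/12}\rceil$ vertices is harmless but unnecessary.
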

\begin{proof}
	Write $w:=\chi(G)$. Suppose first that $w\leq\epsilon^{-2}$. Lemma~\ref{lem:dense-clique} gives \(\omega(G)\geq\min\{\epsilon^{-1},w\}\geq w^{1/2}\).
	Let $K$ be a maximum clique and take its vertices as singleton blocks. This complete blockade has
	length $k=\omega(G)\geq w^{1/2}\geq w^{5/12}$ and mass $1$. Moreover, \(\alpha^2k^{-8}w\leq\alpha^2w^{-3}\leq1\), so the required mass bound holds.
	Assume now that $w>\epsilon^{-2}$, so $w^{-1/2}<\epsilon$. Define a decreasing sequence by \(y_0:=\epsilon\) and \(y_{i+1}:=y_i^{6/5}~(i\geq0)\).
	Let $J$ be the largest integer for which $y_J\geq w^{-1/2}$. Among the indices
	$j\in\{0,1,\ldots,J\}$ for which $G$ has a $(y_j,\chi)$-dense induced subgraph $F$ satisfying
	$\chi(F)\geq y_j w$, choose $j$ maximum, and fix such an $F$. The index $j=0$ is admissible
	because $G$ itself is $(\epsilon,\chi)$-dense.
	
	Apply Lemma~\ref{lem:density-step} to $F$ with $y=y_j$. Suppose that its first outcome occurs.
	Then there is a $(y_j^{6/5},\chi)$-dense induced subgraph $H$ with \(\chi(H)\geq\beta\chi(F)\geq\beta y_j w>y_j^{6/5}w\), where the strict inequality uses $y_j^{1/5}\leq c^{1/5}<\beta$ from
	\eqref{ineq:const6}. If $j<J$, this contradicts the maximality of $j$. Hence $j=J$, and by the
	definition of $J$, we have \(y_j^{6/5}<w^{-1/2}\), so \(y_j<w^{-5/12}\).
	Lemma~\ref{lem:dense-clique} applied to $F$ gives
	\[
	\omega(F)\geq\min\{y_j^{-1},\chi(F)\}
	\geq\min\{y_j^{-1},y_j w\}
	>w^{5/12}.
	\]
	Here $y_j^{-1}>w^{5/12}$ follows from $y_j<w^{-5/12}$, while
	$y_j w\geq w^{1/2}>w^{5/12}$ follows from $y_j\geq w^{-1/2}$. Taking a maximum clique of
	$F$ as singleton blocks gives a complete blockade of length $k>w^{5/12}$. This blockade is
	restrained in $G$, because its vertices form a clique in $G$. Its mass is $1$, while \(\alpha^2k^{-8}w<\alpha^2w^{-7/3}<1\).
	Thus the conclusion holds.
	
	It remains to consider the second outcome of Lemma~\ref{lem:density-step}. It gives a blockade
	that is restrained in $F$, and hence also restrained in $G$, of length
	$k\geq\alpha y_j^{-1/2}\geq\alpha\epsilon^{-1/2}$ and mass at least $k^{-6}\chi(F)$. Since
	$\chi(F)\geq y_j w$ and
	$k\geq\alpha y_j^{-1/2}$, we have $y_j\geq\alpha^2k^{-2}$ and therefore \(k^{-6}\chi(F)
	\geq k^{-6}y_j w
	\geq\alpha^2k^{-8}w\).
	This completes the proof of Lemma~\ref{lem:dense-blockade}.
\end{proof}

\section{The structural theorem and the \texorpdfstring{$\chi$}{chi}-bound}\label{sec:main-proof}

We first establish the complete-pair--or--blockade statement that directly supports induction on
clique number.

\begin{theorem}\label{thm:structure24}
	Every $\{P_5,C_5\}$-free graph $G$ with $\chi(G)\geq2$ contains at least one of the following:
	\begin{enumerate}[label=\textup{(\roman*)}]
		\item a complete pair $(X,Y)$ and a number $y\in(0,1/4)$ such that \(\chi(X)\geq y^{24}\chi(G)\) and \(\chi(Y)\geq(1-y)\chi(G)\);
		\item a restrained blockade of length $k\geq2$ and mass at least $k^{-24}\chi(G)$.
	\end{enumerate}
\end{theorem}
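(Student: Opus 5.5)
We derive Theorem~\ref{thm:structure24} by combining Lemma~\ref{lem:outer-decomp} with Lemma~\ref{lem:dense-blockade}, after disposing of small chromatic number. Write $w:=\chi(G)$.

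\emph{Small case.} Suppose first that $w<c^{-4}$. I aim for outcome~(ii) with $k=2$ and a trivial mass. Since $w\geq 2$, $G$ has an edge $uv$, and $(\{u\},\{v\})$ is a complete, hence restrained, blockade of length $2$ and mass $1$. We need $1\geq 2^{-24}w$, i.e.\ $w\leq 2^{24}$, and indeed $c^{-4}=(101/2)^4<51^4<2^{24}$. Hence we may assume $w\geq c^{-4}$ and apply Lemma~\ref{lem:outer-decomp}.

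\emph{First two outcomes of Lemma~\ref{lem:outer-decomp}.} Its outcome~(ii) is literally outcome~(i) here. Its outcome~(i) gives a complete pair $(P,Q)$ with $\chi(P),\chi(Q)\geq 2^{-24}w$. Then $(P,Q)$ is a complete blockade of length $2$, hence restrained, with mass at least $2^{-24}w=k^{-24}w$, which is outcome~(ii).

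\emph{Dense outcome.} The substantive case is outcome~(iii) of Lemma~\ref{lem:outer-decomp}. It gives $\epsilon\in[w^{-1/4},c]$ and an $(\epsilon,\chi)$-dense induced subgraph $F$ with $\chi(F)\geq\mu\epsilon^2 w$. Apply Lemma~\ref{lem:dense-blockade} to $F$. This yields a blockade, restrained in $F$ and hence in $G$ since $\omega(F)\leq\omega(G)$. Its length satisfies $k\geq\min\{\alpha\epsilon^{-1/2},\chi(F)^{5/12}\}$ and its mass is at least $\alpha^2k^{-8}\chi(F)\geq\alpha^2\mu\epsilon^2k^{-8}w$. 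It then suffices to show
\[
\alpha^2\mu\epsilon^2k^{-8}\geq k^{-24}, \quad\text{equivalently}\quad k^{16}\geq\alpha^{-2}\mu^{-1}\epsilon^{-2},
\]
together with $k\geq 2$. I would check the two branches of the minimum separately.

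\begin{itemize}
\item \emph{Branch $k\geq\alpha\epsilon^{-1/2}$.} Then $k^{16}\geq\alpha^{16}\epsilon^{-8}$, so we need $\alpha^{18}\mu\epsilon^{-6}\geq 1$. Since $\epsilon\leq c$, this follows from $\mu\alpha^6>3^{-12}$ in~\eqref{ineq:const10} and $\alpha c^{-1/2}\geq 3$ in~\eqref{ineq:const1}. Indeed, $\alpha^{18}\epsilon^{-6}\geq(\alpha c^{-1/2})^{12}\alpha^6\geq 3^{12}\alpha^6$, so $\alpha^{18}\mu\epsilon^{-6}\geq 3^{12}\mu\alpha^6>1$. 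Also $k\geq 3\geq 2$.

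\item \emph{Branch $k\geq\chi(F)^{5/12}$.} Here $k^{16}\geq\chi(F)^{20/3}\geq(\mu\epsilon^2w)^{20/3}$. Using $w\geq\epsilon^{-4}$, this is at least $\mu^{20/3}\epsilon^{-12}$, and we need it to be at least $\alpha^{-2}\mu^{-1}\epsilon^{-2}$. That amounts to $\alpha^2\mu^{23/3}\epsilon^{-10}\geq 1$, which presumably reduces to~\eqref{ineq:const9}, $\mu^{5/12}c^{-1/3}>\alpha$. For $k\geq 2$, use $\chi(F)\geq\mu\epsilon^2w\geq\mu\epsilon^{-2}\geq\mu c^{-2}$, which is large.
\end{itemize}

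\emph{Main obstacle.} The difficult step is the second branch, where the exponents must be matched against~\eqref{ineq:const9}. The natural form is $k\geq\chi(F)^{5/12}\geq(\mu w^{1/2})^{5/12}$, giving roughly $k\geq\mu^{5/12}w^{5/24}$, where the mass requirement needs $k^{16}\gtrsim\epsilon^{-2}\leq w^{1/2}$. I expect the exponent bookkeeping, and routing the minimum through $\epsilon\geq w^{-1/4}$, to be where care is needed, possibly by bounding $\alpha\epsilon^{-1/2}$ against $\chi(F)^{5/12}$ using $\mu^{5/12}c^{-1/3}>\alpha$.
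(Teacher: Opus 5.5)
Your architecture is exactly the paper's: an edge when $\chi(G)$ is small, then Lemma~\ref{lem:outer-decomp}, then Lemma~\ref{lem:dense-blockade} in the dense outcome. The small case and the first two outcomes are handled correctly. The gap is in the final numerics of the dense outcome, where neither branch holds as written.

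In the first branch you use $\alpha c^{-1/2}\geq 3$. That is how \eqref{ineq:const1} is printed, but it is a misprint. Its derivation $\alpha^2>4c$ only gives $\alpha c^{-1/2}>2$, Remark~\ref{rmk:numeric} records $\alpha c^{-1/2}\approx 2.004$, and the paper only ever uses it in the form ``$>2$, hence $k\geq 3$''. In fact the inequality you reduce to, $\alpha^{18}\mu\epsilon^{-6}\geq 1$, is false near $\epsilon=c$, where $\alpha^{18}\mu c^{-6}\approx 0.016$. There the continuous bound $k\geq\alpha\epsilon^{-1/2}\approx 2.004$ is simply too weak. The missing idea is integrality: since $k>2$, we have $k\geq 3$. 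Write $k^{16}=k^{12}k^{4}\geq 3^{12}\alpha^{4}\epsilon^{-2}$. Then $k^{16}\geq\alpha^{-2}\mu^{-1}\epsilon^{-2}$ follows from $\mu\alpha^6>3^{-12}$, which is \eqref{ineq:const10}. This is the paper's computation: it uses $\epsilon\geq\alpha^2k^{-2}$ to bound the mass below by $\mu\alpha^6k^{-12}\chi(G)$, and finishes with $k\geq 3$.

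The second branch is left unverified, and the inequality you write there, $\alpha^2\mu^{23/3}\epsilon^{-10}\geq 1$, is also false at $\epsilon=c$; its value is about $0.6$. The cause is that you weakened $(\mu\epsilon^{-2})^{20/3}=\mu^{20/3}\epsilon^{-40/3}$ to $\mu^{20/3}\epsilon^{-12}$, and the result does not reduce to \eqref{ineq:const9}. If you keep $\epsilon^{-40/3}$, a direct numerical check does succeed, but it is not one of the recorded inequalities. The route you hint at in your last sentence is the paper's, and it removes this branch altogether. From $\chi(F)\geq\mu\epsilon^{2}\chi(G)\geq\mu\epsilon^{-2}$ and \eqref{ineq:const9} we get $\chi(F)^{5/12}\geq\mu^{5/12}\epsilon^{-1/3}\cdot\epsilon^{-1/2}\geq\mu^{5/12}c^{-1/3}\epsilon^{-1/2}>\alpha\epsilon^{-1/2}$. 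So the minimum in Lemma~\ref{lem:dense-blockade} is always $\alpha\epsilon^{-1/2}$, and only the repaired first branch is needed.
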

\begin{proof}
	If $\chi(G)\leq2^{24}$, then $G$ has an edge because $\chi(G)\geq2$. The two endpoints of
	that edge form a complete, hence restrained, blockade of length $2$ and mass
	$1\geq2^{-24}\chi(G)$. We may therefore assume that $\chi(G)>2^{24}$. Since $2^7=128>101$, we have
	$2^{24}>101^4/16=c^{-4}$, and we may apply Lemma~\ref{lem:outer-decomp}.
	
	Its first outcome immediately yields outcome~\textup{(ii)} with $k=2$, and its second outcome is
	outcome~\textup{(i)}. It remains to treat its third outcome. Let
	$\epsilon\in[\chi(G)^{-1/4},c]$, and let $F$ be an $(\epsilon,\chi)$-dense induced subgraph with \(\chi(F)\geq\mu\epsilon^2\chi(G)\).
	Apply Lemma~\ref{lem:dense-blockade} to $F$. It produces a restrained blockade in $F$ of
	length $k$ and mass at least \(\alpha^2k^{-8}\chi(F)\), where \(k\geq\min\left\{\alpha\epsilon^{-1/2},\chi(F)^{5/12}\right\}\).
	We first show that the minimum equals its first entry. Since
	$\epsilon\geq\chi(G)^{-1/4}$, we have $\chi(G)\geq\epsilon^{-4}$, and hence \(\chi(F)\geq\mu\epsilon^{-2}\).
	Consequently,
	\[
	\chi(F)^{5/12}
	\geq\mu^{5/12}\epsilon^{-5/6}
	=\bigl(\mu^{5/12}\epsilon^{-1/3}\bigr)\epsilon^{-1/2}
	\geq\mu^{5/12}c^{-1/3}\epsilon^{-1/2}
	>\alpha\epsilon^{-1/2}
	\]
	by \eqref{ineq:const9}. Thus
	\begin{equation}\label{eq:k-lower-final}
		k\geq\alpha\epsilon^{-1/2}>\alpha c^{-1/2}>2.
	\end{equation}
	In particular, $k\geq3$.
	
	The mass of the blockade is at least \(\alpha^2k^{-8}\chi(F) \geq\mu\alpha^2\epsilon^2k^{-8}\chi(G)\).
	From \eqref{eq:k-lower-final}, $\epsilon\geq\alpha^2k^{-2}$, so \(\alpha^2k^{-8}\chi(F)
	\geq\mu\alpha^6k^{-12}\chi(G)\).
	Since $k\geq3$ and $\mu\alpha^6>3^{-12}$ by \eqref{ineq:const10}, it follows that \(\mu\alpha^6k^{-12}>k^{-24}\).
	Therefore the blockade has mass at least $k^{-24}\chi(G)$. A blockade restrained in $F$ is
	also restrained in $G$, because $F$ is induced and $\omega(F)\leq\omega(G)$. This proves
	outcome~\textup{(ii)}.
	This proves Theorem~\ref{thm:structure24}.
\end{proof}

We can now prove the main theorem.

\begin{proof}[\bf Proof of Theorem~\ref{thm:intro-main}]
	We proceed by induction on $t:=\omega(G)$. The assertion is immediate for $t\leq1$. Assume
	$t\geq2$ and that the result holds for all $\{P_5,C_5\}$-free graphs with clique number less
	than $t$. If $\chi(G)=1$, there is nothing to prove, so assume $\chi(G)\geq2$ and apply
	Theorem~\ref{thm:structure24}.
	Suppose first that outcome~\textup{(i)} holds. Since $X$ is complete to $Y$, we have \(\omega(X)+\omega(Y)\leq t\).
	If $\omega(X)\leq yt$, then $1\leq\omega(X)<t$ and the induction hypothesis gives \(\chi(G)
	\leq y^{-24}\chi(X)
	\leq y^{-24}\omega(X)^{24}
	\leq t^{24}\).
	If $\omega(X)>yt$, then $\omega(Y)< (1-y)t<t$, and \(\chi(G)
	\leq(1-y)^{-1}\chi(Y)
	\leq(1-y)^{-1}\omega(Y)^{24}
	\leq(1-y)^{23}t^{24}
	\leq t^{24}\).
	Suppose now that outcome~\textup{(ii)} holds, and let $(B_1,\ldots,B_k)$ be the restrained
	blockade. Since \(\sum_{i=1}^k\omega(B_i)\leq t\), there is an index $i$ with $\omega(B_i)\leq t/k<t$. The induction hypothesis yields \(\chi(G)
	\leq k^{24}\chi(B_i)
	\leq k^{24}\omega(B_i)^{24}
	\leq t^{24}\).
	This completes the induction.
\end{proof}

\section{Concluding remarks}\label{conremark}

We have shown that every $\{P_5,C_5\}$-free graph satisfies $\chi(G)\le \omega(G)^{24}$.
Building on the work of Nguyen~\cite{Nguyen2025}, the improvement from exponent~$40$ comes from two independent refinements.
The cutset argument gains one power of the density parameter because the $C_5$-free
condition reduces the covering multiplicity in Claim~\ref{clm:one-cover} from two to
one. The density increment then replaces the scale change $y\mapsto y^2$ by
$y\mapsto y^{6/5}$ and uses the full output of Lemma~\ref{lem:partition}.
Neither change alone yields exponent $24$.

We selected the constants in \eqref{eq:constants} to make the
three limiting branches compatible: the balanced complete-pair branch must exceed
$2^{-24}\chi(G)$; the dense branch must convert its $\epsilon^2\chi(G)$ mass into
$k^{-24}\chi(G)$; and the terminal block in Lemma~\ref{lem:density-step} must retain
more than a $\beta$ fraction of the original chromatic number.
The numerical margins are small in the balanced-pair branch (where we rely on
$\mu c^3>2^{-24}$), so a further reduction of the exponent within exactly the same parameter scheme would require new quantitative input.
We believe the exponent can be lowered, but our method is near its limit within this framework.
Our constants are chosen to close the induction; further optimization would require a new decomposition.
A more substantial improvement is likely to require a different structural mechanism rather than another round of numerical tuning.

We close with a few open problems:

\begin{problem}
	Is the exponent $24$ in Theorem~\ref{thm:intro-main} optimal for $\{P_5,C_5\}$-free graphs, or can a further improvement be obtained by a different method?
\end{problem}

More generally:

\begin{problem}
	What is the optimal $\chi$-binding function for the class of all $P_5$-free graphs?
\end{problem}

For longer paths, even the existence of a polynomial bound is widely open:

\begin{problem}
	For every integer $t\ge 6$, does there exist a constant $d$ such that every $P_t$-free graph $G$ satisfies $\chi(G) \le \omega(G)^d$?
\end{problem}

In particular, the following special case is also of interest:

\begin{problem}
	For every integer $t\ge 6$, does there exist a constant $d$ such that every $\{P_t,C_5\}$-free graph $G$ satisfies $\chi(G) \le \omega(G)^d$?
\end{problem}

%\subsection{Fixed constants and numerical checks}\label{subsec:constants}

%\begin{align}
%	\alpha c^{-1/2}&>2, \label{ineq:const1}\\
%	\alpha&<\frac{1}{2\sqrt2}-\frac{\sqrt c}{2}, \label{ineq:const2}\\
%	\beta-c&>\frac12, \label{ineq:const3}\\
%	\sqrt2+\frac{\sqrt c}{\beta}&<\gamma, \label{ineq:const4}\\
%	1-\gamma\alpha&>\beta, \label{ineq:const5}\\
%	c^{1/5}&<\beta, \label{ineq:const6}\\
%	\beta\alpha^{12/5}3^{18/5}&>1, \label{ineq:const7}\\
%	\mu c^3&>2^{-24}, \label{ineq:const8}\\
%	\mu^{5/12}c^{-1/3}&>\alpha, \label{ineq:const9}\\
%	\mu\alpha^6&>3^{-12}, \label{ineq:const10}\\
%	\mu&>c^{18}. \label{ineq:const11}
%\end{align}
%None of these comparisons relies on rounded decimal values. We include exact checks.

%Hence Problem~\ref{problem1} has an affirmative answer with the explicit constant $c=13$.

\vspace{6mm}

\n{\bf Acknowledgements:} %The authors would like to thank the
%anonymous referees for their constructive corrections and valuable
%comments on this paper, which have considerably improved the
%presentation of this paper.
%We thank Dr. Hongzhang Chen for his helpful discussions and for bringing Refs. \cite{Ajtai1980,Shearer1983} to our attention.
We thank Dr. Hongzhang Chen for helpful discussions concerning the choice of the numerical constants in \eqref{eq:constants}, in particular the construction of $\lambda$ and $\mu$.
This work was partially supported by the Foundation for Cultivated Young Talents of Fujian Province, China (Grant No. 2026350294), by the Natural Science Foundation of Fujian Province, China (Grant No. 2026J001968), and by the Fujian Key Laboratory of Granular Computing and Applications (Minnan Normal University), the Institute of Meteorological Big Data-Digital Fujian, and the Fujian Key Laboratory of Data Science and Statistics.

%\section*{Declarations}

%\subsection*{Author Contributions}
%
%K.-Y. Lan: conceptualization, methodology, formal analysis, writing – original draft, writing – review \& editing.
%W.-L. Zhong: investigation, validation, writing – review \& editing.
%Both authors contributed equally to this work and are listed in alphabetical order.

\subsection*{Data availability}
Data sharing is not applicable to this article as no datasets were generated or analysed during the current study.

\subsection*{Conflict of interest}
The authors declare that they have no known competing financial interests or personal relationships that could have appeared to influence the work reported in this paper.


\begin{thebibliography}{99}






\bibitem{BonomoChudnovskyMaceliSchaudtSteinZhong2018}
F. Bonomo, M. Chudnovsky, P. Maceli, O. Schaudt, M. Stein, and M. Zhong,
Three-coloring and list three-coloring of graphs without induced paths on seven vertices,
\textit{Combinatorica} \textbf{38} (2018), 779--801.

%\bibitem{BrandstadtKlembtMahfud2006}
%A. Brandst\"adt, T. Klembt, and S. Mahfud,
%\(P_6\)- and triangle-free graphs revisited: structure and bounded clique-width,
%\textit{Discrete Math. Theor. Comput. Sci.} \textbf{8} (2006), 173--188.

\bibitem{BrianskiDaviesWalczak2024}
M. Bria\'nski, J. Davies, and B. Walczak,
Separating polynomial \(\chi\)-boundedness from \(\chi\)-boundedness,
\textit{Combinatorica} \textbf{44} (2024), 1--8.



\bibitem{CameronHuangMerkel2021}
K. Cameron, S. Huang, and O. Merkel,
An optimal $\chi$-bound for $\{P_6, \text{diamond}\}$-free graphs,
\textit{J. Graph Theory} \textbf{97} (2021), 451--465.

\bibitem{CameronHuangPenevSivaraman2020}
K. Cameron, S. Huang, I. Penev, and V. Sivaraman,
The class of \(\{P_7,C_4,C_5\}\)-free graphs: decomposition, algorithms, and \(\chi\)-boundedness,
\textit{J. Graph Theory} \textbf{93} (2020), 503--552.

\bibitem{ChenWuXu2024}
R. Chen, D. Wu, and B. Xu,
Structure of some $\{P_7,C_4\}$-free graphs with application to colorings,
\textit{Discrete Appl. Math.} \textbf{357} (2024), 14--23.

\bibitem{ChenXu2025a}
R. Chen and B. Xu,
Structure and coloring of a family of $\{P_7,C_5\}$-free graphs,
\textit{Graphs Combin.} \textbf{41} (2025), 71.

%\bibitem{ChenXu2025b}
%R. Chen and B. Xu,
%Structure and coloring of $\{P_7, C_5, \text{diamond}\}$-free graphs,
%\textit{Discrete Appl. Math.} \textbf{372} (2025), 298--307.

\bibitem{ChoudumKarthickShalu2007}
S. A. Choudum, T. Karthick, and M. A. Shalu,
Perfectly coloring and linearly \(\chi\)-bound \(P_6\)-free graphs,
\textit{J. Graph Theory} \textbf{54} (2007), 293--306.

\bibitem{ChudnovskyHuangSpirklZhong2021}
M. Chudnovsky, S. Huang, S. Spirkl, and M. Zhong,
List 3-coloring graphs with no induced $P_6+rP_3$,
\textit{Algorithmica} \textbf{83} (2021), 216--251.

\bibitem{ChudnovskyKarthickMaceliMaffray2020}
M. Chudnovsky, T. Karthick, P. Maceli, and F. Maffray,
Coloring graphs with no induced five-vertex path or gem,
\textit{J. Graph Theory} \textbf{95} (2020), 527--542.

\bibitem{ChudnovskyMaceliStachoZhong2017}
M. Chudnovsky, P. Maceli, J. Stacho, and M. Zhong,
4-coloring $P_6$-free graphs with no induced 5-cycles,
\textit{J. Graph Theory} \textbf{84} (2017), 262--285.

\bibitem{ChudnovskySpirklZhong2024a}
M. Chudnovsky, S. Spirkl, and M. Zhong,
Four-coloring $P_6$-free graphs. I. Extending an excellent precoloring,
\textit{SIAM J. Comput.} \textbf{53} (2024), 111--145.

\bibitem{ChudnovskySpirklZhong2024b}
M. Chudnovsky, S. Spirkl, and M. Zhong,
Four-coloring $P_6$-free graphs. II. Finding an excellent precoloring,
\textit{SIAM J. Comput.} \textbf{53} (2024), 146--187.

\bibitem{ChudnovskyStacho2018}
M. Chudnovsky and J. Stacho,
3-colorable subclasses of $P_8$-free graphs,
\textit{SIAM J. Discrete Math.} \textbf{32} (2018), 1111--1138.

\bibitem{Erdos1959}
P. Erd\H{o}s,
Graph theory and probability,
\textit{Canad. J. Math.} \textbf{11} (1959), 34--38.

\bibitem{Esperet2017}
L. Esperet,
Graph colorings, flows and perfect matchings,
Habilitation Thesis, Universit\'e Grenoble Alpes, 2017.

\bibitem{EsperetLemoineMaffrayMorel2013}
L. Esperet, L. Lemoine, F. Maffray, and G. Morel,
The chromatic number of $\{P_5, K_4\}$-free graphs,
\textit{Discrete Math.} \textbf{313} (2013), 743--754.

\bibitem{GaspersHuang2019}
S. Gaspers and S. Huang,
Linearly $\chi$-bounding $\{P_6,C_4\}$-free graphs,
\textit{J. Graph Theory} \textbf{92} (2019), 322--342.

%\bibitem{Goedgebeur2023}
%J. Goedgebeur, S. Huang, Y. Ju, and O. Merkel,
%Colouring graphs with no induced six-vertex path or diamond,
%\textit{Theoret. Comput. Sci.} \textbf{941} (2023), 278--299.

\bibitem{GravierHoangMaffray2003}
S. Gravier, C. T. Ho\`ang, and F. Maffray,
Coloring the hypergraph of maximal cliques of a graph with no long path,
\textit{Discrete Math.} \textbf{272} (2003), 285--290.

\bibitem{Gyarfas1973}
A. Gy\'arf\'as,
On Ramsey covering numbers,
\textit{Colloq. Math. Soc. J\'anos Bolyai} \textbf{10} (1973), 801--816.

\bibitem{Gyarfas1987}
A. Gy\'arf\'as,
Problems from the world surrounding perfect graphs,
\textit{Zastos. Mat.} \textbf{XIX} (1987), 413--441.

%\bibitem{Hall1935}
%P.~Hall, On representatives of subsets,
%\textit{J. London Math. Soc.} \textbf{10} (1935), 26--30.

\bibitem{Huang2024}
S. Huang,
The optimal $\chi$-bound for $\{P_7,C_4,C_5\}$-free graphs,
\textit{Discrete Math.} \textbf{347} (2024), 114036.

\bibitem{HuangKarthick2021}
S. Huang and T. Karthick,
On graphs with no induced five-vertex path or paraglider,
\textit{J. Graph Theory} \textbf{97} (2021), 305--323.

%\bibitem{HuangZhouChang2026}
%S. Huang, Y. Zhou, and Y. Chang,
%The optimal chromatic bound for even-hole-free graphs without induced seven-vertex paths,
%\url{https://arxiv.org/pdf/2602.04403}.

\bibitem{JuJookenGoedgebeurHuang2026}
Y. Ju, J. Jooken, J. Goedgebeur, and S. Huang,
There are finitely many 5-vertex-critical $\{P_6,\text{bull}\}$-free graphs,
\textit{J. Graph Theory} \textbf{112} (2026), 255--266.

\bibitem{KarthickMaffray2019}
T. Karthick and F. Maffray,
Square-free graphs with no six-vertex induced path,
\textit{SIAM J. Discrete Math.} \textbf{33} (2019), 874--909.

\bibitem{Mycielski1955}
J. Mycielski,
Sur le coloriage des graphs,
\textit{Colloq. Math.} \textbf{3} (1955), 161--162.

\bibitem{Nguyen2025}
T. H. Nguyen,
Polynomial $\chi$-boundedness for excluding $P_5$,
\url{https://arxiv.org/pdf/2512.24907}.





%\bibitem{Randerath1998}
%B. Randerath,
%The Vizing Bound for the Chromatic Number Based on Forbidden Pairs,
%Ph.D. thesis, Shaker Verlag, Aachen, 1998.
%
%\bibitem{Schiermeyer2020}
%I. Schiermeyer,
%Polynomial $\chi$-binding functions for $P_5$-free graphs, available at \url{https://www.iti.zcu.cz/colourings20/talks/Schiermeyer.pdf}.

\bibitem{SchiermeyerRanderath2019}
I. Schiermeyer and B. Randerath,
Polynomial $\chi$-binding functions and forbidden induced subgraphs: a survey,
\textit{Graphs Combin.} \textbf{35} (2019), 1--31.



\bibitem{ScottSeymourSpirkl2023}
A. Scott, P. Seymour, and S. Spirkl,
Polynomial bounds for chromatic number. IV: A near-polynomial bound for excluding the five-vertex path,
\textit{Combinatorica} \textbf{43} (2023), 845--852.

%\bibitem{Seinsche1974}
%D. Seinsche, On a property of the class of $n$-colorable graphs,
%\textit{J. Combin. Theory Ser. B} \textbf{16} (1974), 191--193.















%\bibitem{BrauseGeisser2021}
%C. Brause and M. Gei{\ss}er,
%On graphs without an induced path on 5 vertices and without an induced dart or kite,
%in: \textit{Extended Abstracts EuroComb 2021}, Trends Math. \textbf{14} (2021), 311--317.







%\bibitem{ChudnovskySeymourRobertsonThomas2010}
%M. Chudnovsky, P. Seymour, N. Robertson, and R. Thomas,
%$K_4$-free graphs with no odd holes,
%\textit{J. Combin. Theory Ser. B} \textbf{100} (2010), 313--331.



%\bibitem{HongXu2025}
%X. Hong and B. Xu,
%Coloring of $\{P_6, \text{dart}, K_4\}$-free graphs,
%\textit{Discrete Appl. Math.} \textbf{365} (2025), 223--230.



%\bibitem{KarthickMaffray2016}
%T. Karthick and F. Maffray,
%Vizing bound for the chromatic number on some graph classes,
%\textit{Graphs Combin.} \textbf{32} (2016), 1447--1460.



%\bibitem{KarthickMishra2018}
%T. Karthick and S. Mishra,
%On the chromatic number of $\{P_6, \text{diamond}\}$-free graphs,
%\textit{Graphs Combin.} \textbf{34} (2018), 677--692.





%\bibitem{Lovasz1972}
%L. Lov\'asz,
%A characterization of perfect graphs,
%\textit{J. Combin. Theory Ser. B} \textbf{13} (1972), 95--98.





\bibitem{Sumner1981}
D. P. Sumner,
Subtrees of a graph and chromatic number,
in \textit{The Theory and Applications of Graphs},
G. Chartrand, ed.,
John Wiley \& Sons, New York (1981), 557--576.

%\bibitem{WoegingerSgall2001}
%G. J. Woeginger and J. Sgall,
%The complexity of coloring graphs without long induced paths,
%\textit{Acta Cybernet.} \textbf{15} (2001), 107--117.


\bibitem{Zykov1949}
A. A. Zykov,
On some properties of linear complexes,
\textit{Mat. Sb. N.S.} \textbf{24} (1949), 163--188.

%\bibitem{KarthickMaffray2016}
%T. Karthick and F. Maffray,
%Vizing bound for the chromatic number on some graph classes,
%\textit{Graphs Combin.} \textbf{32} (2016), 1447--1460.





























%\bibitem{chudnovsky2006}
%M. Chudnovsky, N. Robertson, P. Seymour, and R. Thomas,
%The strong perfect graph theorem,
%\textit{Ann. of Math. (2)} \textbf{164} (2006), 51--229.





%\bibitem{GaspersHuang2019}
%S. Gaspers and S. Huang,
%$(2P_2,K_4)$-free graphs are 4-colorable,
%\textit{SIAM J. Discrete Math.} \textbf{33} (2019), 1095--1120.




%\bibitem{Gyarfas2023}
%A. Gy\'arf\'as,
%Problems close to my heart. With a preface by G. S\'ark\H{o}zy,
%\textit{European J. Combin.} \textbf{111} (2023), 103695.



%\bibitem{lovasz1972characterization}
%L. Lov\'asz,
%A characterization of perfect graphs,
%\textit{J. Combin. Theory Ser. B} \textbf{13} (1972), 95--98.



%\bibitem{scott2020induced}
%A. Scott and P. Seymour,
%Induced subgraphs of graphs with large chromatic number. VII. Gy\'arf\'as's complementation conjecture,
%\textit{J. Combin. Theory Ser. B} \textbf{142} (2020), 43--55.







%\bibitem{Wagon1980}
%S. Wagon,
%A bound on the chromatic number of graphs without certain induced subgraphs,
%\textit{J. Combin. Theory Ser. B} \textbf{29} (1980), 345--346.







	

%\bibitem{CCDO2026}
%M. Chudnovsky, L. Cook, J. Davies and S. Oum,
%Reuniting \(\chi\)-boundedness with polynomial \(\chi\)-boundedness,
%\textit{J. Combin. Theory Ser. B} \textbf{176} (2026), 30--73.








      
\end{thebibliography}
\end{document}